\theoremstyle{plain}
\newtheorem{theorem}{Theorem}
\newtheorem{corollary}{Corollary}
\newtheorem{lemma}{Lemma}
\newtheorem{definition}{Definition}
\theoremstyle{definition} 
\newtheorem{remark}{Remark}
\begin{document}

\title[On the number of $k$-full integers between three successive 
$k$-th powers]
{On the number of $k$-full integers between three successive 
$k$-th powers}

\author{Shusei Narumi}
\author{Yohei Tachiya}
\address{Graduate School of Science and Technology,\newline
\hspace*{\parindent}Hirosaki University, Hirosaki 036-8561, Japan}
\email{h25ms125@hirosaki-u.ac.jp}
\email{tachiya@hirosaki-u.ac.jp}

\makeatletter
\@namedef{subjclassname@2020}{%
  \textup{2020} Mathematics Subject Classification}
\makeatother
\keywords{square-full integers, $k$-full integers, asymptotic density, uniform distribution}

\subjclass[2020]{11N25, 11N64, 11J71}

\date{\today}


\begin{abstract} 
Let $k\geq2$ be an integer. 
The aim of this paper is to investigate the distribution 
of $k$-full integers between three successive 
$k$-th powers. 
More precisely, for any integers $\ell,m\ge0$, we establish the explicit asymptotic density 
for the set of integers $n$ such that the intervals 
$(n^k, (n+1)^k)$ and $((n+1)^k, (n+2)^k)$ contain exactly $\ell$ and $m$ $k$-full integers, respectively. 
As an application, we prove that there are  
infinitely many triples of successive $k$-th powers in the sequence of $k$-full integers,  
thereby providing a more general answer to Shiu's question. 
\end{abstract}

\maketitle

\section{Introduction and main results}\label{sec1}

Throughout this paper, let 
$k\geq2$ be an integer. 
A positive integer $n$ is called a $k$-full integer 
if $p^k$ divides $n$ for every prime factor $p$ of $n$. 
When $k=2$, such integers are known as square-full or powerful integers. 
Let $\mathcal{F}_k$ be 
the set of all $k$-full integers. 

For a set $\mathcal{A}$ of positive integers and a real number $x>1$, 
let $\mathcal{A}(x)$ 
denote the set of integers in $\mathcal{A}$ not exceeding $x$. 
Moreover, let $\#\mathcal{A}$ denote the number of elements in the finite set $\mathcal{A}$. 
In 1934, Erd{\H o}s and Szekeres \cite{ES34} established the asymptotic formula 
\[
\#{\mathcal F}_k(x)=c_kx^{1/k}+O\bigl(x^{1/(k+1)}\bigr)\quad (x\to\infty) 
\]
with an explicit positive constant $c_k$; in particular,  
\[
\#{\mathcal F}_2(x)=c_2x^{1/2}+O\bigl(x^{1/3}\bigr)\quad (x\to\infty),
\]
where $c_2:={\zeta(3/2)}/{\zeta(3)}=2.173\dots$ and $\zeta(s)$ is the Riemann zeta function. 
Bateman and Grosswald~\cite{BG58} improved the $O$-estimate of the error term (see also \cite[\S 14.4 and p.~438--439]{I03}). 
In related work, Shiu~\cite{S80} investigated the distribution of square-full integers between successive squares. 
Let $\ell\ge0$ be an integer and 
\begin{equation}\label{eq:102533}
\mathcal{A}_{\ell}^{(k)}:=
\{
n\in\mathbb{Z}_{\ge1}
\mid
\text{$\#( (n^k,(n+1)^k)\cap \mathcal{S}_{k})=\ell$}
\},
\end{equation}
where $\mathcal{S}_{k}$ denotes the set of all $k$-full integers 
that are not perfect $k$-th powers, and thus, 
$\mathcal{A}_{\ell}^{(k)}$ defines the set of positive integers $n$ 
for which the interval $(n^k,(n+1)^k)$ 
contains exactly $\ell$ $k$-full integers in $\mathcal{S}_k$.  
Shiu proved in \cite{S80} that, for each integer $\ell\ge0$, there exists 
an explicit positive constant $d_\ell$ such that 
\begin{equation}\label{eq:Shi0203}
\#\mathcal{A}_{\ell}^{(2)}(x)=d_{\ell} x+o(x) \quad (x\to\infty).
\end{equation}
The first few numerical values of $d_{\ell}$ are given in \cite[p.~176]{S80}; e.g., $d_{0}=0.275\dots$, $d_{1}=0.395\dots$, $d_{2}=0.231\dots$. 
In the case $\ell=0$, 
De Koninck and Luca~\cite{KF04} provided a more precise asymptotic formula: 
\[
\#\mathcal{A}_{0}^{(2)}(x)=d_{0}x+O(x/(\log\log x)^{\frac{1}{2}})\quad (x\to\infty).
\]
Furthermore, Shiu's result was 
extended to $k$-full integers 
by 
Xiong and Zaharescu~\cite{XZ11}, who established the asymptotic formula
\begin{equation}\label{eq:XZ0203}
\#\mathcal{A}_{\ell}^{(k)}(x)=d_{\ell}^{(k)}x+O(x/(\log\log x)^{\frac{1}{2k}})\quad 
(x\to\infty),
\end{equation}
with explicit positive constant $d_{\ell}^{(k)}$ for each $k\geq2$ and $\ell\ge0$. 
They also showed that, for each integer $k\ge2$, 
the generating function of $(d_{\ell}^{(k)})_{\ell\ge0}$ is given by 
\begin{equation}\label{gene}
\sum_{\ell\ge 0}^{}d_{\ell}^{(k)}z^\ell=\prod_{\lambda\in\Lambda_k}^{}
\left(1+\frac{z-1}{\lambda}\right),
\end{equation}
where the product is taken over all real numbers $\lambda>2$ in the set 
\begin{equation}\label{Lambda_k}
\Lambda_k:=
\left.\left\{
(b_1^{k+1}\cdots b_{k-1}^{2k-1})^{1/k}
\,\right|\, 
b_1,\dots,b_{k-1}\in\mathbb{Z}_{\geq1},\,\, b_1\cdots b_{k-1}\ge2,\,\,\mu^2(b_1\cdots b_{k-1})=1
\right\}
\end{equation}
and $\mu$ is the M{\" o}bius function. 
Expanding the right-hand side of \eqref{gene} and comparing the coefficients yields
\begin{equation}\label{eq:1124}
d_{\ell}^{(k)}
=
\sum_{n\ge0}^{}(-1)^n
\binom{\ell+n}{\ell}
\xi_{\ell+n}^{(k)},
\end{equation}
where $(\xi_{r}^{(k)})_{r\ge0}$ is a sequence defined by 
\begin{equation}\label{eq:112514}
\xi_{0}^{(k)}:=1,\qquad 
\xi_{r}^{(k)}:=
\sum_{\substack{\mathcal{L}\subseteq \Lambda_k\\\#\mathcal{L}=r}}\prod_{\lambda\in\mathcal{L}}\frac{1}{\lambda},\quad r=1,2,\dots.
\end{equation}
Shiu~\cite{S80} previously 
obtained the expression \eqref{eq:1124} for the case $k=2$.

The aim of this paper is to investigate the distribution of $k$-full integers 
in the wider interval $(n^k,(n+2)^k)$. 
More precisely, in Theorem~\ref{thm1} below, we establish the explicit asymptotic densities for the sets
\[
\mathcal{A}_{\ell,m}^{(k)}:=
\left\{
n\in\mathbb{Z}_{\ge1}\,
\middle|\,
\substack{
\text{$\#\big( (n^k,(n+1)^k)\cap \mathcal{S}_{k}\bigr)=\ell$}, \\
\text{$\#\big( ((n+1)^k,(n+2)^k)\cap \mathcal{S}_{k}\bigr)=m$}
}
\right\}
\]
for all $\ell,m\ge0$, 
and show that these densities are all positive. 
In particular, the sets $\mathcal{A}_{\ell,m}^{(k)}$ are 
infinite for all non-negative integers $\ell$ and $m$. 
This provides a more general answer to Shiu's question \cite[p.~172, lines 10--13]{S80} regarding the distribution of squares in the sequence of square-full integers. 
In Section~\ref{sec6}, we give explicit expressions for the asymptotic densities of 
$\mathcal{A}_{\ell,m}^{(k)}$ $(\ell,m\ge0)$ 
and, as an application, recover the result~\eqref{gene} of Xiong and Zaharescu. \\

Before stating our results, we need some notation. 
It is known that any $k$-full integer $n$ has the unique representation  
\begin{equation}\label{eq:0}
n=a^k b_1^{k+1}\cdots b_{k-1}^{2k-1},
\end{equation}
where $a,b_1,\dots,b_{k-1}$ are positive integers such that $b_1\cdots b_{k-1}$ is a square-free integer.   
For instance, a square-full integer $n$ is uniquely written as 
$n=a^2b^3$ with a positive integer $a$ and a square-free integer $b$. 
From definition~\eqref{Lambda_k} of 
$\Lambda_k$ and expression \eqref{eq:0}, every $k$-full integer $n$ can be uniquely 
represented as $n=a^k\lambda^k$ for an integer $a\ge1$ and a real number $\lambda\in\Lambda_k\cup\{1\}$.  
For a non-empty subset $\mathcal{I}\subseteq \Lambda_k$, 
we define 
\[
\mathcal{S}_\mathcal{I}:=\{a^k\lambda^k\mid a\in\mathbb{Z}_{\ge1},\,\lambda\in \mathcal{I}\}\subseteq \mathcal{S}_k.
\]
In particular, $\mathcal{S}_{\Lambda_k}(=\mathcal{S}_{k})$ is the set of all $k$-full integers that are not perfect $k$-th powers. 
For the empty set $\varnothing\subseteq \Lambda_k$, we define   
$\mathcal{S}_{\varnothing}:=\varnothing$ and $\#\mathcal{S}_{\varnothing}:=0$.  
\begin{definition}\label{def2}
Let $\mathcal{A}$ be a set of positive integers. 
The asymptotic density of $\mathcal{A}$ is defined by 
\begin{equation}\label{eq:1103}
d(\mathcal{A}):=\lim_{x\to\infty}\frac{\#\mathcal{A}(x)}{x},
\end{equation}
provided the limit exists. In this case, 
we say that $\mathcal{A}$ has asymptotic density $d(\mathcal{A})$.
\end{definition}
Clearly, if the set $\mathcal{A}$ has asymptotic density $d(\mathcal{A})$, then 
definition~\eqref{eq:1103} is equivalent to the asymptotic formula 
$\#\mathcal{A}(x)=d(\mathcal{A})x+o(x)$ $(x\to\infty)$.  
Our results are the following. 
\begin{theorem}\label{thm2}
Let $\mathcal{I}$ and $\mathcal{J}$ be finite subsets of $\Lambda_k$ with $\mathcal{I}\cap \mathcal{J}=\varnothing$. 
Then the set 
\begin{equation}\label{AIJ}
\mathcal{B}_{\mathcal{I,J}}^{(k)}:=
\left\{
n\in\mathbb{Z}_{\ge1}
\,\left|\, 
\substack{
\text{$\#\big( (n^k,(n+1)^k)\cap \mathcal{S}_\mathcal{I}\bigr)=\# \mathcal{I}$}, \\
\text{$\#\big( ((n+1)^k,(n+2)^k)\cap \mathcal{S}_\mathcal{J}\bigr)=\# \mathcal{J}$},\\ 
\text{$( n^k,(n+2)^k)\cap \mathcal{S}_{\Lambda_k\setminus (\mathcal{I}\cup \mathcal{J})}=\varnothing$}
}
\right\}
\right.
\end{equation}
has positive asymptotic density 
\begin{equation}\label{eq:1013}
d(\mathcal{B}_{\mathcal{I,J}}^{(k)})=
\prod_{\lambda\in \mathcal{I}\cup \mathcal{J}}
\frac{1}\lambda{}\cdot \prod_{\lambda\in \Lambda_k\setminus(\mathcal{I}\cup\mathcal{J})}\bigg(1-\frac{2}{\lambda}\bigg).
\end{equation}
\end{theorem}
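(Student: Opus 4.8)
The plan is to reduce the three interval conditions defining $\mathcal{B}_{\mathcal{I},\mathcal{J}}^{(k)}$ to a family of conditions on the fractional parts $\{n/\lambda\}$ $(\lambda\in\Lambda_k)$, and then to evaluate the resulting density by a truncation argument combined with the multidimensional equidistribution theorem of Lemma~\ref{lem1}. First I would set up the reduction. Every element of $\mathcal{S}_{\{\lambda\}}$ has the form $a^k\lambda^k=(a\lambda)^k$, so $(n^k,(n+1)^k)$ meets $\mathcal{S}_{\{\lambda\}}$ precisely when $(n/\lambda,(n+1)/\lambda)$ contains an integer. Since every $\lambda\in\Lambda_k$ satisfies $\lambda\ge 2^{(k+1)/k}>2$ and is irrational (its $k$th power has every prime exponent in $\{k+1,\dots,2k-1\}$, hence is not a perfect $k$th power), each of $(n/\lambda,(n+1)/\lambda)$, $((n+1)/\lambda,(n+2)/\lambda)$, $(n/\lambda,(n+2)/\lambda)$ has length $<1$, contains at most one integer, and $\{n/\lambda\}$ never meets an endpoint. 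A short computation with fractional parts then shows that the relevant condition on $\{n/\lambda\}$ is membership in
\[
E_\lambda=
\begin{cases}
(1-1/\lambda,\,1), & \lambda\in\mathcal{I},\\
(1-2/\lambda,\,1-1/\lambda), & \lambda\in\mathcal{J},\\
(0,\,1-2/\lambda), & \lambda\in\Lambda_k\setminus(\mathcal{I}\cup\mathcal{J}),
\end{cases}
\]
of Lebesgue measure $1/\lambda$, $1/\lambda$, and $1-2/\lambda$, respectively.

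Because $\mathcal{S}_{\mathcal{I}}=\bigcup_{\lambda\in\mathcal{I}}\mathcal{S}_{\{\lambda\}}$ is a disjoint union (the representation $n=a^k\lambda^k$ is unique) and each $\mathcal{S}_{\{\lambda\}}$ meets $(n^k,(n+1)^k)$ in at most one point, the first defining condition $\#((n^k,(n+1)^k)\cap\mathcal{S}_{\mathcal{I}})=\#\mathcal{I}$ is equivalent to $\{n/\lambda\}\in E_\lambda$ for every $\lambda\in\mathcal{I}$; the same holds for $\mathcal{J}$ and for the avoidance condition. Hence $n\in\mathcal{B}_{\mathcal{I},\mathcal{J}}^{(k)}$ if and only if $\{n/\lambda\}\in E_\lambda$ for all $\lambda\in\Lambda_k$. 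Now fix $T$ large enough that $\mathcal{I}\cup\mathcal{J}\subseteq\Lambda_k^{\le T}:=\{\lambda\in\Lambda_k:\lambda\le T\}$ (a finite set), and let $\mathcal{B}^{\le T}$ be the set of $n$ with $\{n/\lambda\}\in E_\lambda$ only for $\lambda\in\Lambda_k^{\le T}$, so that $\mathcal{B}_{\mathcal{I},\mathcal{J}}^{(k)}\subseteq\mathcal{B}^{\le T}$. The crucial arithmetic input is that the numbers $1/\lambda$ $(\lambda\in\Lambda_k^{\le T})$ together with $1$ are linearly independent over $\mathbb{Q}$; this follows from the classical linear independence of the radicals $\prod_p p^{a_p/k}$ (a theorem of Besicovitch), since each $1/\lambda$ is a nonzero rational multiple of such a radical and distinct $\lambda$ produce distinct radicals. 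Granting this, Lemma~\ref{lem1} applied to the vector $(\{n/\lambda\})_{\lambda\in\Lambda_k^{\le T}}$ on the torus yields that $\mathcal{B}^{\le T}$ has density equal to the product of the measures $|E_\lambda|$, namely
\[
P_T:=\prod_{\lambda\in\mathcal{I}\cup\mathcal{J}}\frac{1}{\lambda}\cdot\prod_{\lambda\in\Lambda_k^{\le T}\setminus(\mathcal{I}\cup\mathcal{J})}\Bigl(1-\frac{2}{\lambda}\Bigr).
\]

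Finally I would let $T\to\infty$. From $\mathcal{B}_{\mathcal{I},\mathcal{J}}^{(k)}\subseteq\mathcal{B}^{\le T}$ we get $\limsup_x \#\mathcal{B}_{\mathcal{I},\mathcal{J}}^{(k)}(x)/x\le P_T$ for every $T$. For the matching lower bound, any $n\in\mathcal{B}^{\le T}\setminus\mathcal{B}_{\mathcal{I},\mathcal{J}}^{(k)}$ must satisfy $(n^k,(n+2)^k)\cap\mathcal{S}_{\{\lambda\}}\neq\varnothing$ for some $\lambda>T$. For a fixed such $\lambda$ the number of $n\le x$ with this property is at most $2(x+2)/\lambda$, since there are at most $(x+2)/\lambda$ points of $\mathcal{S}_{\{\lambda\}}$ below $(x+2)^k$ and each lies in at most two of the overlapping intervals $(n^k,(n+2)^k)$; summing over $\lambda>T$ gives $\#(\mathcal{B}^{\le T}\setminus\mathcal{B}_{\mathcal{I},\mathcal{J}}^{(k)})(x)\le 2(x+2)\sum_{\lambda>T}1/\lambda$. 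The series converges, as $\sum_{\lambda\in\Lambda_k}1/\lambda\le\prod_{i=1}^{k-1}\zeta(1+i/k)<\infty$, so its tail tends to $0$ and $\liminf_x \#\mathcal{B}_{\mathcal{I},\mathcal{J}}^{(k)}(x)/x\ge P_T-2\sum_{\lambda>T}1/\lambda$. Letting $T\to\infty$, both bounds converge to the claimed value $\prod_{\lambda\in\mathcal{I}\cup\mathcal{J}}\frac{1}{\lambda}\prod_{\lambda\in\Lambda_k\setminus(\mathcal{I}\cup\mathcal{J})}(1-2/\lambda)$, so the density exists and equals it; positivity is immediate, since every factor $1-2/\lambda$ is positive (as $\lambda>2$) and the infinite product converges. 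I expect the main obstacle to be precisely the linear independence of the $1/\lambda$ required to invoke Lemma~\ref{lem1}, together with the uniform tail estimate that justifies interchanging the truncation limit with the density.
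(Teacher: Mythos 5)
Your proposal is correct and follows essentially the same route as the paper: translate the three interval conditions into fractional-part conditions $\{n/\lambda\}\in E_\lambda$ (the paper's Lemma~\ref{lem:1018} and Lemma~\ref{lem:1016}), apply Weyl equidistribution via Besicovitch's linear-independence theorem to the finite truncation, and control the tail using $\sum_{\lambda\in\Lambda_k}1/\lambda<\infty$. The only cosmetic difference is in the tail estimate, where you observe that each point of $\mathcal{S}_{\{\lambda\}}$ lies in at most two intervals $(n^k,(n+2)^k)$, while the paper splits into odd and even $n$ and builds an injection into $\mathcal{S}_{\Lambda_k\setminus\mathcal{L}}((x+2)^k)$; these yield the same bound.
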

Note that the infinite product in the right-hand side of \eqref{eq:1013} 
converges since 
\begin{equation}\label{conv}
\sum_{\lambda\in \Lambda_k}^{}\frac{1}{\lambda}\le 
\sum_{b_1,\dots,b_{k-1}\ge1}\frac{1}{(b_1^{k+1}\cdots b_{k-1}^{2k-1})^{1/k}}=\prod_{j=1}^{k-1}\zeta
\bigg(
1+\frac{j}{k}
\bigg)<\infty.
\end{equation}
The expression \eqref{eq:1013} shows that the asymptotic density $d(\mathcal{B}_{\mathcal{I,J}}^{(k)})$
depends only on the union $\mathcal{I}\cup\mathcal{J}$, 
rather than on the individual choices of $\mathcal{I}$ and $\mathcal{J}$; 
in particular, we have 
\begin{equation}\label{symme}
d(\mathcal{B}_{\mathcal{I,J}}^{(k)})=d(\mathcal{B}_{\mathcal{J,I}}^{(k)})
\end{equation}
for any pair of finite subsets $\mathcal{I},\mathcal{J}\subseteq\Lambda_k$ with $\mathcal{I}\cap \mathcal{J}=\varnothing$. 
When $\mathcal{I}=\mathcal{J}=\varnothing$, 
Theorem~\ref{thm2} reduces to the following Corollary~\ref{cor12}. 
For an integer $k\ge2$, let $C_k$ be the positive constant defined by  
\[
C_{k}:=\prod_{\lambda\in\Lambda_k}^{}\left(
1-\frac{2}{\lambda}
\right).
\] 
\begin{corollary}\label{cor12}
The set  
\[
\mathcal{B}_{\varnothing,\varnothing}^{(k)}=
\{
n\in\mathbb{Z}_{\ge1}\mid ( n^k,(n+2)^k)\cap \mathcal{S}_{k}=\varnothing
\}
\]
has positive asymptotic density $d(\mathcal{B}_{\varnothing,\varnothing}^{(k)})=C_{k}$.
In particular, there are infinitely many integers $n$ such that the interval $(n^k,(n+2)^k)$ contains no $k$-full integers except for $(n+1)^k$.
\end{corollary}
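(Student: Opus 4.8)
The plan is to derive Corollary~\ref{cor12} directly from Theorem~\ref{thm2} by specializing to $\mathcal{I}=\mathcal{J}=\varnothing$, and then to establish the positivity of $C_k$ together with the stated interpretation. First I would check the set identity. With $\mathcal{I}=\mathcal{J}=\varnothing$ we have $\mathcal{S}_{\mathcal{I}}=\mathcal{S}_{\mathcal{J}}=\varnothing$, so the first two conditions in \eqref{AIJ} read $0=\#\mathcal{I}$ and $0=\#\mathcal{J}$ and hold for every $n$; since $\Lambda_k\setminus(\mathcal{I}\cup\mathcal{J})=\Lambda_k$, the third condition becomes $(n^k,(n+2)^k)\cap\mathcal{S}_{\Lambda_k}=(n^k,(n+2)^k)\cap\mathcal{S}_k=\varnothing$. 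Hence $\mathcal{B}_{\varnothing,\varnothing}^{(k)}$ coincides with the set described in the corollary. Evaluating \eqref{eq:1013} at $\mathcal{I}=\mathcal{J}=\varnothing$, the first product is empty (hence equal to $1$) and the second runs over all of $\Lambda_k$, giving $d(\mathcal{B}_{\varnothing,\varnothing}^{(k)})=\prod_{\lambda\in\Lambda_k}(1-2/\lambda)=C_k$.

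The main point requiring verification is that $C_k>0$, and this is the step I expect to carry the actual content. I would argue in two parts. For positivity of each factor, I would show that every $\lambda\in\Lambda_k$ satisfies $\lambda>2$: minimizing $(b_1^{k+1}\cdots b_{k-1}^{2k-1})^{1/k}$ subject to $b_1\cdots b_{k-1}\ge2$ and square-free is achieved by $b_1=2$, $b_2=\cdots=b_{k-1}=1$ (placing the single prime factor $2$ on the variable carrying the smallest exponent $k+1$), yielding the least element $(2^{k+1})^{1/k}=2^{(k+1)/k}>2$. Thus $0<1-2/\lambda<1$ for every $\lambda\in\Lambda_k$. For convergence to a nonzero value, I would invoke the bound \eqref{conv}, which gives $\sum_{\lambda\in\Lambda_k}2/\lambda<\infty$; together with $0<2/\lambda<1$ this guarantees that the infinite product $\prod_{\lambda\in\Lambda_k}(1-2/\lambda)$ converges to a strictly positive limit, so $C_k>0$.

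Finally I would deduce the two assertions of the corollary. Since $d(\mathcal{B}_{\varnothing,\varnothing}^{(k)})=C_k>0$, the asymptotic formula $\#\mathcal{B}_{\varnothing,\varnothing}^{(k)}(x)=C_k x+o(x)$ forces $\#\mathcal{B}_{\varnothing,\varnothing}^{(k)}(x)\to\infty$, so the set is infinite. To pass from ``$\mathcal{S}_k$-free'' to ``$k$-full-free except the central power,'' I would note that the only perfect $k$th power lying in the open interval $(n^k,(n+2)^k)$ is $(n+1)^k$, because $n^k<m^k<(n+2)^k$ forces $n<m<n+2$, i.e. $m=n+1$. As $(n+1)^k$ is itself a $k$-full integer, and for $n\in\mathcal{B}_{\varnothing,\varnothing}^{(k)}$ the interval contains no element of $\mathcal{S}_k$ (the $k$-full integers that are not perfect $k$th powers), the only $k$-full integer in $(n^k,(n+2)^k)$ is $(n+1)^k$. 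Since there are infinitely many such $n$, the second claim follows. The only genuine obstacle is the positivity argument for $C_k$; everything else is a direct specialization of Theorem~\ref{thm2} and an elementary interval count.
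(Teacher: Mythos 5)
Your proposal is correct and follows the same route as the paper: Corollary~\ref{cor12} is obtained there simply by specializing Theorem~\ref{thm2} to $\mathcal{I}=\mathcal{J}=\varnothing$, with the convergence and positivity of the infinite product already guaranteed by the bound \eqref{conv} and the fact that every $\lambda\in\Lambda_k$ exceeds $2$ (minimal element $2^{(k+1)/k}$), exactly as you argue. Your additional verifications --- the set identity, the positivity of $C_k$, and the observation that $(n+1)^k$ is the only perfect $k$th power in $(n^k,(n+2)^k)$ --- are all correct fillings-in of details the paper leaves implicit.
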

For example, the case $k=2$ of Corollary~\ref{cor12} asserts 
that the set
\[
\mathcal{B}_{\varnothing,\varnothing}^{(2)}=
\left\{
n\in\mathbb{Z}_{\ge1}\,\left|\, \substack{\text{the interval $( n^2,(n+2)^2)$ contains }\\ \text{no square-full integers other than $(n+1)^2$}}
\right
\}\right.=\{3,6,12,23,26,34,\dots\}
\]
has positive asymptotic density  
\[
d(\mathcal{B}_{\varnothing,\varnothing}^{(2)})=C_{2}=\prod_{n\ge2}^{}\bigg(1-\frac{2\mu^2(n)}{n^{3/2}}\bigg)=0.049227\dots.
\]
\begin{remark}\label{rem0203}
Shiu's formula~\eqref{eq:Shi0203} with $\ell=0$ implies that the set $\mathcal{A}_{0}^{(2)}$ is infinite; that is,   
there exist infinitely many pairs of 
consecutive perfect squares in the sequence of square-full integers, 
such as 
$(1,4),(9,16)$, and $(16,25)$ 
(Note that the pair $(4, 9)$ is excluded because the square-full integer $8$ lies between them). 
A similar property also holds for $k$-full integers for any $k\ge3$ by \eqref{eq:XZ0203}. 
Corollary~\ref{cor12} provides a stronger result, establishing the existence of 
infinitely many triples of consecutive perfect $k$-th powers in the sequence of 
$k$-full integers; for instance, when $k=2$, such triples include
$(9,16,25),(36,49,64)$, and $(144,169,196)$. 
This is the best possible result in this direction, as there are no quadruples of consecutive $k$-th powers in the sequence of $k$-full integers. 
Indeed, for any integer $n\ge1$, the inequality $2^{1+1/k}\le 2\sqrt{2}<3$ 
guarantees that there exists an integer $a\geq1$ satisfying  
\[n^k<a^k2^{k+1}<(n+3)^k,\]
where $a^k2^{k+1}$ is a $k$-full integer but not a $k$-th power. 
\end{remark}
\begin{theorem}\label{thm1}
Let $\ell,m\ge0$ be integers. Then the set  
\begin{equation}\label{eq:1024}
\mathcal{A}_{\ell,m}^{(k)}:=
\left\{
n\in\mathbb{Z}_{\ge1}
\left|
\substack{
\text{$\#\big( (n^k,(n+1)^k)\cap \mathcal{S}_{k}\bigr)=\ell$}, \\
\text{$\#\big( ((n+1)^k,(n+2)^k)\cap \mathcal{S}_{k}\bigr)=m$}
}
\right\}\right.
\end{equation}
has positive asymptotic density
\begin{equation}\label{eq:10121}
d(\mathcal{A}_{\ell,m}^{(k)})=
\sum_{\substack{\mathcal{I},\mathcal{J}\subseteq \Lambda_k\\\#\mathcal{I}=\ell,\, \#\mathcal{J}=m,\, \mathcal{I}\cap \mathcal{J}=\varnothing}}d(\mathcal{B}_{\mathcal{I,J}}^{(k)}),
\end{equation}
where $d(\mathcal{B}_{\mathcal{I,J}}^{(k)})$ is 
the asymptotic density of $\mathcal{B}_{\mathcal{I,J}}^{(k)}$ given in \eqref{eq:1013}.  
\end{theorem}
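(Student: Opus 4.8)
The plan is to realize $\mathcal{A}_{\ell,m}^{(k)}$ as a disjoint union of the sets $\mathcal{B}_{\mathcal{I},\mathcal{J}}^{(k)}$ and then read off its density from Theorem~\ref{thm2}. The key starting fact is that every $\lambda\in\Lambda_k$ satisfies $\lambda\ge 2^{1+1/k}>2$ (the minimum being attained at $b_1=2$, $b_2=\cdots=b_{k-1}=1$ in \eqref{Lambda_k}), so the set of admissible integers $a$ in the condition $n<a\lambda<n+2$ lies in an interval of length $2/\lambda<1$. Hence, for each fixed $\lambda$, the double interval $(n^k,(n+2)^k)$ contains at most one element of $\mathcal{S}_{\{\lambda\}}$, and a fortiori so does each half $(n^k,(n+1)^k)$ and $((n+1)^k,(n+2)^k)$. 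Using the disjoint decomposition $\mathcal{S}_k=\bigsqcup_{\lambda\in\Lambda_k}\mathcal{S}_{\{\lambda\}}$ coming from the unique representation \eqref{eq:0}, I attach to each $n$ the contribution sets
\[
\mathcal{I}_n:=\{\lambda\in\Lambda_k:(n^k,(n+1)^k)\cap\mathcal{S}_{\{\lambda\}}\neq\varnothing\},\quad
\mathcal{J}_n:=\{\lambda\in\Lambda_k:((n+1)^k,(n+2)^k)\cap\mathcal{S}_{\{\lambda\}}\neq\varnothing\}.
\]
The one-element bound yields $\#((n^k,(n+1)^k)\cap\mathcal{S}_k)=\#\mathcal{I}_n$ and $\#(((n+1)^k,(n+2)^k)\cap\mathcal{S}_k)=\#\mathcal{J}_n$, while the same bound on the double interval forces $\mathcal{I}_n\cap\mathcal{J}_n=\varnothing$.

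Next I would unpack the three defining conditions of $\mathcal{B}_{\mathcal{I},\mathcal{J}}^{(k)}$ in \eqref{AIJ}. For disjoint finite $\mathcal{I},\mathcal{J}$ they translate exactly to $\mathcal{I}\subseteq\mathcal{I}_n$, $\mathcal{J}\subseteq\mathcal{J}_n$, and $\mathcal{I}_n\cup\mathcal{J}_n\subseteq\mathcal{I}\cup\mathcal{J}$; together with $\mathcal{I}_n\cap\mathcal{J}_n=\varnothing$ these give $\mathcal{I}_n=\mathcal{I}$ and $\mathcal{J}_n=\mathcal{J}$. Thus $n\in\mathcal{B}_{\mathcal{I},\mathcal{J}}^{(k)}$ if and only if $(\mathcal{I}_n,\mathcal{J}_n)=(\mathcal{I},\mathcal{J})$, and therefore
\[
\mathcal{A}_{\ell,m}^{(k)}=\{n:\#\mathcal{I}_n=\ell,\ \#\mathcal{J}_n=m\}=\bigsqcup_{\substack{\mathcal{I},\mathcal{J}\subseteq\Lambda_k\\ \#\mathcal{I}=\ell,\,\#\mathcal{J}=m,\,\mathcal{I}\cap\mathcal{J}=\varnothing}}\mathcal{B}_{\mathcal{I},\mathcal{J}}^{(k)}.
\]
Were this union finite, finite additivity of asymptotic density and Theorem~\ref{thm2} would give \eqref{eq:10121} at once.

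The main obstacle is that the union is infinite, so I must control the tail to pass from finite additivity to the infinite sum. I would truncate: for a height $T$ put $\Lambda_k^{\le T}:=\{\lambda\in\Lambda_k:\lambda\le T\}$ and split $\mathcal{A}_{\ell,m}^{(k)}=\mathcal{A}'_T\sqcup\mathcal{A}''_T$, where $n\in\mathcal{A}'_T$ exactly when $\mathcal{I}_n\cup\mathcal{J}_n\subseteq\Lambda_k^{\le T}$. By the equivalence above, $\mathcal{A}'_T$ is the finite disjoint union of the $\mathcal{B}_{\mathcal{I},\mathcal{J}}^{(k)}$ over disjoint pairs with $\mathcal{I}\cup\mathcal{J}\subseteq\Lambda_k^{\le T}$, so $d(\mathcal{A}'_T)$ exists and equals the corresponding finite partial sum of \eqref{eq:10121}. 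For $\mathcal{A}''_T$ some $\lambda>T$ contributes, i.e.\ $(a\lambda)^k\in(n^k,(n+2)^k)$ for an integer $a\ge1$; since each pair $(a,\lambda)$ confines $n$ to the open interval $(a\lambda-2,a\lambda)$ of length $2$, it accounts for at most two integers $n$, and only pairs with $a\lambda\le x+2$ matter for $n\le x$. This yields
\[
\#\mathcal{A}''_T(x)\le 2\sum_{\lambda>T}\#\{a\ge1:a\lambda\le x+2\}\le 2(x+2)\sum_{\lambda>T}\frac{1}{\lambda},
\]
so by the convergence \eqref{conv}, $\limsup_{x\to\infty}\#\mathcal{A}''_T(x)/x\le 2\sum_{\lambda>T}1/\lambda=:\varepsilon(T)$, and $\varepsilon(T)\to0$ as $T\to\infty$.

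Finally I would squeeze and let $T\to\infty$. From $\#\mathcal{A}_{\ell,m}^{(k)}(x)=\#\mathcal{A}'_T(x)+\#\mathcal{A}''_T(x)$ one gets, for every $T$, the bounds $\liminf_x\#\mathcal{A}_{\ell,m}^{(k)}(x)/x\ge d(\mathcal{A}'_T)$ and $\limsup_x\#\mathcal{A}_{\ell,m}^{(k)}(x)/x\le d(\mathcal{A}'_T)+\varepsilon(T)$. Since all summands $d(\mathcal{B}_{\mathcal{I},\mathcal{J}}^{(k)})$ are non-negative and the full sum converges (being dominated by $\binom{\ell+m}{\ell}\xi_{k,\ell+m}$ through \eqref{eq:112514}), the partial sums $d(\mathcal{A}'_T)$ increase to the right-hand side of \eqref{eq:10121}; letting $T\to\infty$ collapses both bounds to that value, proving existence of $d(\mathcal{A}_{\ell,m}^{(k)})$ and formula \eqref{eq:10121}. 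Positivity follows since any single disjoint pair $(\mathcal{I},\mathcal{J})$ of sizes $\ell,m$ already contributes the strictly positive term $d(\mathcal{B}_{\mathcal{I},\mathcal{J}}^{(k)})$ from Theorem~\ref{thm2}.
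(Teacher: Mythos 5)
Your proposal is correct and follows essentially the same route as the paper: the disjoint decomposition $\mathcal{A}_{\ell,m}^{(k)}=\bigsqcup\mathcal{B}_{\mathcal{I},\mathcal{J}}^{(k)}$ (the paper's Lemma~\ref{lem:1024}, built on Lemma~\ref{lem:1018}), truncation to a finite subfamily, a tail bound of order $x\sum_{\lambda>T}\lambda^{-1}$, and an $\varepsilon$-squeeze. The only cosmetic difference is in the tail estimate, where you count pairs $(a,\lambda)$ directly with multiplicity two, whereas the paper splits $n$ into odd and even classes and builds an injection into $\mathcal{S}_{\Lambda_k\setminus\mathcal{L}}((x+2)^k)$; both yield the same bound.
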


From \eqref{symme} and \eqref{eq:10121}, we have 
the symmetry 
$d(\mathcal{A}_{\ell,m}^{(k)})=d(\mathcal{A}_{m,\ell}^{(k)})$ for any non-negative integers $\ell$ and $m$. 
Moreover, by \eqref{eq:1013} and \eqref{eq:10121}, we have  
\begin{equation}\label{eq:1205}
d(\mathcal{A}_{\ell,m}^{(k)})=
\sum_{\substack{\mathcal{I},\mathcal{J}\subseteq \Lambda_k\\\#\mathcal{I}=\ell,\, \#\mathcal{J}=m,\, \mathcal{I}\cap \mathcal{J}=\varnothing}}
\prod_{\lambda\in \mathcal{I}\cup \mathcal{J}}
\frac{1}\lambda{}\cdot \prod_{\lambda\in \Lambda_k\setminus(\mathcal{I}\cup\mathcal{J})}\bigg(1-\frac{2}{\lambda}\bigg),
\end{equation}
which immediately yields the generating function of the sequence $(\mathcal{A}_{\ell,m}^{(k)})_{\ell,m\ge0}$: 
\[
\sum_{\ell,m\ge0}^{}d(\mathcal{A}_{\ell,m}^{(k)})z^\ell w^m=
\prod_{\lambda\in\Lambda_k}
\left(
1+\frac{z+w-2}{\lambda}
\right).
\]
The first few numerical values of $d(\mathcal{A}_{\ell,m}^{(k)})$ for $k=2,3$, computed using the formula~\eqref{eq:20251125}, 
are provided in Tables~\ref{table1} and \ref{table2} in Section~\ref{sec6}; for example, when $k=2$, we have 
\begin{gather*}
d(\mathcal{A}_{0,0}^{(2)})=d(\mathcal{B}_{\varnothing,\varnothing}^{(2)})= 0.049\dots,\qquad 
d(\mathcal{A}_{0,1}^{(2)})=d(\mathcal{A}_{1,0}^{(2)})= 0.107\dots,\\
d(\mathcal{A}_{1,1}^{(2)})=0.158\dots,\qquad 
d(\mathcal{A}_{0,2}^{(2)})=d(\mathcal{A}_{2,0}^{(2)})= 0.079\dots,\qquad 
d(\mathcal{A}_{0,3}^{(2)})=d(\mathcal{A}_{3,0}^{(2)})= 0.030\dots.
\end{gather*}
\begin{corollary}\label{cor:1025}
Let $\ell\ge0$ be an integer, and let $\mathcal{A}_{\ell}^{(k)}$ be defined as in \eqref{eq:102533}. 
Then we have  
\begin{equation}\label{eq:1025444}
d(\mathcal{A}_{\ell}^{(k)})=
\sum_{m\ge0}^{}d(\mathcal{A}_{\ell,m}^{(k)})=
\sum_{m\ge0}^{}d(\mathcal{A}_{m,\ell}^{(k)}).
\end{equation}
\end{corollary}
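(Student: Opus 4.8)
The plan is to realize $\mathcal{A}_{\ell}^{(k)}$ as the disjoint union $\mathcal{A}_{\ell}^{(k)}=\bigsqcup_{m\ge0}\mathcal{A}_{\ell,m}^{(k)}$ and to promote the trivial finite additivity of the associated counting functions to an equality of densities. Disjointness and exhaustion are immediate from the definitions \eqref{eq:102533} and \eqref{eq:1024}: every $n\in\mathcal{A}_{\ell}^{(k)}$ satisfies $\#\big((n^k,(n+1)^k)\cap\mathcal{S}_k\big)=\ell$, and the number $m:=\#\big(((n+1)^k,(n+2)^k)\cap\mathcal{S}_k\big)$ is a well-defined non-negative integer, so $n$ belongs to exactly one $\mathcal{A}_{\ell,m}^{(k)}$; conversely $\mathcal{A}_{\ell,m}^{(k)}\subseteq\mathcal{A}_{\ell}^{(k)}$ for every $m$. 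Granting the first equality in \eqref{eq:1025444}, the second is then immediate from the termwise symmetry $d(\mathcal{A}_{\ell,m}^{(k)})=d(\mathcal{A}_{m,\ell}^{(k)})$ recorded after Theorem~\ref{thm1} (a consequence of \eqref{symme} and \eqref{eq:10121}). Thus only the first equality requires an argument.

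Rather than assume beforehand that $d(\mathcal{A}_{\ell}^{(k)})$ exists, I would establish its existence and its value in one stroke by a squeezing argument, the only genuine difficulty being the interchange of the limit defining the density with the infinite sum. Set $T_M(x):=\#\{\,n\le x\mid \#\big(((n+1)^k,(n+2)^k)\cap\mathcal{S}_k\big)>M\,\}$. The partition yields, for every integer $M\ge0$ and every $x>1$,
\[
\sum_{m=0}^{M}\#\mathcal{A}_{\ell,m}^{(k)}(x)\ \le\ \#\mathcal{A}_{\ell}^{(k)}(x)\ \le\ \sum_{m=0}^{M}\#\mathcal{A}_{\ell,m}^{(k)}(x)+T_M(x),
\]
the right-hand inequality holding because those $n\in\mathcal{A}_{\ell}^{(k)}$ not counted by the finite sum have their second interval containing more than $M$ elements of $\mathcal{S}_k$. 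Dividing by $x$ and using that each $d(\mathcal{A}_{\ell,m}^{(k)})$ exists by Theorem~\ref{thm1}, the lower and upper densities of $\mathcal{A}_{\ell}^{(k)}$ satisfy
\[
\sum_{m=0}^{M}d(\mathcal{A}_{\ell,m}^{(k)})\ \le\ \underline{d}(\mathcal{A}_{\ell}^{(k)})\ \le\ \overline{d}(\mathcal{A}_{\ell}^{(k)})\ \le\ \sum_{m=0}^{M}d(\mathcal{A}_{\ell,m}^{(k)})+\limsup_{x\to\infty}\frac{T_M(x)}{x}.
\]

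Everything now hinges on the uniform tail estimate $\limsup_{x\to\infty}T_M(x)/x\to0$ as $M\to\infty$, which I regard as the main obstacle and would settle by a first-moment bound. Each element of $((n+1)^k,(n+2)^k)\cap\mathcal{S}_k$ is of the form $a^k\lambda^k$ with $a\ge1$ and $\lambda\in\Lambda_k$, equivalently $n+1<a\lambda<n+2$; since $\lambda>2$, the interval $((n+1)/\lambda,(n+2)/\lambda)$ has length $1/\lambda<1$ and so contains at most one admissible $a$. Summing the resulting indicators over $n\le x$, each fixed $\lambda$ is charged at most once per multiple $a\lambda\le x+2$, that is at most $(x+2)/\lambda$ times, whence
\[
\sum_{n\le x}\#\big(((n+1)^k,(n+2)^k)\cap\mathcal{S}_k\big)\ \le\ (x+2)\sum_{\lambda\in\Lambda_k}\frac1\lambda,
\]
and the series on the right converges by \eqref{conv}. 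Markov's inequality then gives $T_M(x)\le (x+2)(M+1)^{-1}\sum_{\lambda\in\Lambda_k}\lambda^{-1}$, hence $\limsup_{x\to\infty}T_M(x)/x\le (M+1)^{-1}\sum_{\lambda\in\Lambda_k}\lambda^{-1}\to0$ as $M\to\infty$. Letting $M\to\infty$ in the two-sided bound squeezes $\underline{d}(\mathcal{A}_{\ell}^{(k)})$ and $\overline{d}(\mathcal{A}_{\ell}^{(k)})$ to the common value $\sum_{m\ge0}d(\mathcal{A}_{\ell,m}^{(k)})$, proving that $d(\mathcal{A}_{\ell}^{(k)})$ exists and equals this sum, as claimed.
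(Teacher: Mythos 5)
Your proof is correct, and while its skeleton (partition $\mathcal{A}_{\ell}^{(k)}=\bigsqcup_{m\ge0}\mathcal{A}_{\ell,m}^{(k)}$, truncate, bound the tail) matches the paper's, the tail estimate is obtained by a genuinely different device. The paper fixes $\varepsilon>0$, truncates $\Lambda_k$ to its first $N$ elements $\mathcal{L}$ with $\sum_{\lambda\in\Lambda_k\setminus\mathcal{L}}\lambda^{-1}<\varepsilon$, observes via Lemma~\ref{lem:1018} that any $n$ with $\#(I_n\cap\mathcal{S}_k)>N$ must have $I_n$ meeting $\mathcal{S}_{\{\lambda\}}$ for some $\lambda\notin\mathcal{L}$ (pigeonhole, since each $\lambda$ contributes at most one point), and then injects the tail set into $\mathcal{S}_{\Lambda_k\setminus\mathcal{L}}((x+2)^k)$, whose cardinality is at most $\varepsilon(x+2)$. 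You instead bound the first moment $\sum_{n\le x}\#(I_n\cap\mathcal{S}_k)\le(x+2)\sum_{\lambda\in\Lambda_k}\lambda^{-1}$ and apply Markov's inequality, getting a tail bound of order $1/(M+1)$ times the full (convergent) series rather than an $\varepsilon$-tail of the series. Both rest ultimately on \eqref{conv} and on the fact that each $\lambda$ hits a given interval $I_n$ at most once, but your route has two modest advantages: it does not invoke the convergence of $\sum_{m}d(\mathcal{A}_{\ell,m}^{(k)})$ as an external input (the paper cites \eqref{eq:10237} from Section~\ref{sec6} for this, whereas you recover convergence from the lower half of your sandwich), and it establishes the existence of $d(\mathcal{A}_{\ell}^{(k)})$ and its value simultaneously via upper and lower densities rather than re-running the $6\varepsilon$ bookkeeping of Theorem~\ref{thm2}. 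Your reduction of the second equality in \eqref{eq:1025444} to the termwise symmetry $d(\mathcal{A}_{\ell,m}^{(k)})=d(\mathcal{A}_{m,\ell}^{(k)})$ is also exactly what the paper intends.
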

Corollary~\ref{cor:1025} states that 
the asymptotic density is countably additive with respect to 
the disjoint unions
$
\mathcal{A}_{\ell}^{(k)}=\sqcup_{m\ge0}
\mathcal{A}_{\ell,m}^{(k)}=
\sqcup_{m\ge0}\mathcal{A}_{m,\ell}^{(k)}
$, which is non-trivial since the asymptotic density is 
not countably additive in general. \\

This paper is organized as follows. 
In Section~\ref{sec2}, we show Lemma~\ref{lem:1016},  
which plays a crucial role in the proof of Theorem \ref{thm2}. 
Sections~\ref{sec3} and \ref{sec4} are dedicated to the proofs of 
Theorems~\ref{thm2} and \ref{thm1}, respectively; we note that 
Theorem~\ref{thm1} is derived from Theorem~\ref{thm2}. 
In Section \ref{sec5}, we prove Corollary~\ref{cor:1025} 
as an application of Theorem~\ref{thm1}. 
It should be noted that our proofs rely on 
the classical multidimensional equidistribution theorem~(see Lemma~\ref{lem1} in Section~\ref{sec2}); 
in particular, we do not require discrepancy estimates such as the Koksma--Hlawka or  Erd{\H o}s--Tur{\'a}n--Koksma inequalities 
used in \cite{KF04,XZ11}, as our investigation focuses on the asymptotic density. 
Finally, Section~\ref{sec6} provides explicit formulas 
for $d(\mathcal{A}_{\ell,m}^{(k)})$ and discusses the maximum values for $k=2$ and $3$. 


\section{Preparation for proof of Theorem~\ref{thm2}}\label{sec2}
We first prepare the following Lemmas \ref{lem1}--\ref{lem:1018} for the proof of Lemma~\ref{lem:1016}. 
Let $\{x\}$ denote the fractional part of a real number $x$.

\begin{lemma}[{cf. \cite[p.~48, Example~6.1]{KN74}}]\label{lem1}
If the real numbers $1,\alpha_1,\dots,\alpha_m$ are linearly independent over $\mathbb{Q}$, 
then the sequence of $m$-tuples $\boldsymbol{\alpha}_n:=
\bigl(\{\alpha_1n\},\dots,\{\alpha_mn\}\bigr)$ $(n\ge1)$ is uniformly distributed modulo~$1$. That is,  
\[
\lim_{x\to\infty}\frac{\#\{n\in\mathbb{Z}_{\ge1}\mid \boldsymbol{\alpha}_n\in[\boldsymbol{a},\boldsymbol{b}), n\le x\}}{x}=\prod_{j=1}^{m}(b_j-a_j)
\]
for any interval $[\boldsymbol{a},\boldsymbol{b}):=\prod_{j=1}^{m}[a_j,b_j)\subseteq [0,1)^m$. 
\end{lemma}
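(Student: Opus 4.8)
The plan is to prove this via Weyl's criterion for uniform distribution modulo $1$ in dimension $m$, which converts the counting statement into the vanishing of a family of exponential sums. Recall that $(\boldsymbol{\alpha}_n)_{n\ge1}$ is uniformly distributed modulo $1$ if and only if for every nonzero integer vector $\boldsymbol{h}=(h_1,\dots,h_m)\in\mathbb{Z}^m\setminus\{\boldsymbol{0}\}$ one has
\[
\lim_{N\to\infty}\frac1N\sum_{n=1}^{N}e^{2\pi i\langle\boldsymbol{h},\boldsymbol{\alpha}_n\rangle}=0,\qquad \langle\boldsymbol{h},\boldsymbol{\alpha}_n\rangle=\sum_{j=1}^{m}h_j\{\alpha_j n\}.
\]
So first I would fix such an $\boldsymbol{h}$ and simplify the summand. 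The key reduction is that the fractional parts can be dropped inside the exponential: since $h_j\lfloor\alpha_j n\rfloor\in\mathbb{Z}$, we have $e^{2\pi i h_j\{\alpha_j n\}}=e^{2\pi i h_j\alpha_j n}$, whence
\[
e^{2\pi i\langle\boldsymbol{h},\boldsymbol{\alpha}_n\rangle}=e^{2\pi i n\beta},\qquad \beta:=\sum_{j=1}^{m}h_j\alpha_j.
\]
This collapses the $m$-dimensional sum into a one-dimensional geometric sum with frequency $\beta$.

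The step where the hypothesis genuinely enters is the claim that $\beta\notin\mathbb{Z}$. Indeed, if $\beta=c$ for some integer $c$, then $\sum_{j=1}^{m}h_j\alpha_j-c\cdot 1=0$ would be a $\mathbb{Q}$-linear relation among $1,\alpha_1,\dots,\alpha_m$, nontrivial because $\boldsymbol{h}\neq\boldsymbol{0}$, contradicting their linear independence over $\mathbb{Q}$. Hence $e^{2\pi i\beta}\neq 1$. I would then sum the geometric series explicitly,
\[
\sum_{n=1}^{N}e^{2\pi i n\beta}=e^{2\pi i\beta}\,\frac{e^{2\pi i N\beta}-1}{e^{2\pi i\beta}-1},
\]
bound it in absolute value by $2/|e^{2\pi i\beta}-1|$ (a constant independent of $N$, using $|e^{2\pi i N\beta}-1|\le 2$), divide by $N$, and let $N\to\infty$. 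This gives the limit $0$ for every nonzero $\boldsymbol{h}$, and Weyl's criterion then yields uniform distribution modulo $1$.

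As for the main obstacle: for this classical statement the analytic content is light, since the geometric-series estimate is entirely routine once $e^{2\pi i\beta}\neq1$ is secured. The only genuinely delicate point is the deduction of the \emph{non-integrality} of $\beta$ from the linear independence hypothesis; it is precisely the inclusion of the constant $1$ among $1,\alpha_1,\dots,\alpha_m$ that forces $\beta$ to avoid every integer (not merely to be nonzero), and this is exactly what makes $e^{2\pi i\beta}\neq1$ and hence drives the whole argument. I would therefore present that implication carefully and treat the exponential-sum bound as a short computation.
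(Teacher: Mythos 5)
Your proof is correct and is precisely the classical argument behind the cited result: the paper gives no proof of Lemma~\ref{lem1} at all, referring instead to Kuipers--Niederreiter, where this example is established exactly as you describe --- via the multidimensional Weyl criterion, collapse of the exponential sum to a geometric series with frequency $\beta=\sum_{j}h_j\alpha_j$, and the observation that including $1$ in the independence hypothesis forces $\beta\notin\mathbb{Z}$. Nothing further is needed, provided Weyl's criterion itself is taken as known, as it is in the cited reference.
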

Let $\Lambda_k$ be the set of real numbers $>2$ defined by \eqref{Lambda_k}. 
\begin{lemma}\label{linind}
If the numbers $\lambda_1,\dots,\lambda_n\in\Lambda_k$ are distinct, 
then the numbers $1,\lambda_1^{-1},\dots,\lambda_n^{-1}$ are linearly independent over $\mathbb{Q}$. 
\end{lemma}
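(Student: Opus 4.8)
The plan is to exploit the very special shape of the exponents occurring in the canonical factorisation of elements of $\Lambda_k$, and then to reduce the claim to the classical theorem on the $\mathbb{Q}$-linear independence of radicals of primes. Write each $\lambda_i=N_i^{1/k}$ with $N_i=b_1^{k+1}\cdots b_{k-1}^{2k-1}$ a positive integer. Since $b_1\cdots b_{k-1}$ is square-free, any prime $p$ divides at most one of the $b_j$, and only to the first power; hence the exponent $e_p(N_i)$ of $p$ in $N_i$ is either $0$ (if $p\nmid b_1\cdots b_{k-1}$) or equals $k+j\in\{k+1,\dots,2k-1\}$ (if $p\mid b_j$). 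The point I would stress is that this constraint makes $e_p(N_i)$ recoverable from its residue modulo $k$: the assignment $0\mapsto 0$ and $k+j\mapsto j$ for $1\le j\le k-1$ is a bijection of $\{0\}\cup\{k+1,\dots,2k-1\}$ onto $\{0,1,\dots,k-1\}$.

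First I would strip a rational factor from each radical. Let $p_1,\dots,p_s$ be the finitely many primes dividing $N_1\cdots N_n$, and for each $i$ let $\bar e_{i,p}\in\{0,1,\dots,k-1\}$ denote the residue of $e_p(N_i)$ modulo $k$. Collecting the integer parts of the exponents $-e_p(N_i)/k$, one obtains $\lambda_i^{-1}=N_i^{-1/k}=q_i\,\mu_i$, where $q_i\in\mathbb{Q}_{>0}$ and $\mu_i=\prod_{j=1}^{s}p_j^{a_{ij}/k}$ is a monomial with exponents $a_{ij}\in\{0,1,\dots,k-1\}$ determined by the residues $(\bar e_{i,p})_p$ via the bijection above. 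Because $b_1\cdots b_{k-1}\ge 2$, at least one $e_p(N_i)$ is nonzero, so $\mu_i\neq 1$. Moreover, the monomial $\mu_i$ encodes $(\bar e_{i,p})_p$ faithfully, and by the exponent bijection $(\bar e_{i,p})_p$ in turn determines all $e_p(N_i)$, hence $N_i$; therefore distinct $\lambda_i$ yield distinct monomials $\mu_i$, none of which is the trivial monomial $1$.

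It then remains to invoke the classical fact that the $k^{s}$ real monomials $\prod_{j=1}^{s}p_j^{a_j/k}$ with $0\le a_j\le k-1$ are linearly independent over $\mathbb{Q}$, equivalently that $[\mathbb{Q}(p_1^{1/k},\dots,p_s^{1/k}):\mathbb{Q}]=k^{s}$ for distinct primes $p_1,\dots,p_s$ (a Besicovitch-type independence-of-radicals theorem, provable by Kummer theory after adjoining a primitive $k$th root of unity, or by Eisenstein-type irreducibility along the tower). Granting this, the $n+1$ distinct monomials $1,\mu_1,\dots,\mu_n$ are $\mathbb{Q}$-linearly independent, so any rational relation $c_0+\sum_{i=1}^{n}c_i\lambda_i^{-1}=c_0\cdot 1+\sum_{i=1}^{n}(c_iq_i)\mu_i=0$ forces $c_0=0$ and $c_iq_i=0$, whence $c_i=0$ since $q_i\neq 0$. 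This gives the linear independence of $1,\lambda_1^{-1},\dots,\lambda_n^{-1}$ over $\mathbb{Q}$.

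The main obstacle is the input theorem on independence of prime radicals; the rest is bookkeeping on exponents. The genuinely essential use of the hypothesis $\lambda_i\in\Lambda_k$ is the constraint $e_p(N_i)\in\{0\}\cup\{k+1,\dots,2k-1\}$, which simultaneously guarantees that the reduced monomial $\mu_i$ distinguishes distinct $\lambda_i$ and that $\mu_i\neq 1$. Without this constraint two distinct radicals could lie in the same class modulo $k$th powers, making them rational multiples of one another, and the independence would fail.
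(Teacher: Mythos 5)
Your argument is correct and follows essentially the same route as the paper: both reduce the claim to the $\mathbb{Q}$-linear independence of the monomials $\prod_j p_j^{a_j/k}$ with $0\le a_j\le k-1$ (Besicovitch's theorem, cited in the paper as \cite[Theorem~2]{Be39}), after observing that each $\lambda_i^{-1}$ is a nonzero rational multiple of such a monomial and that the special exponents $k+j$ make these monomials pairwise distinct and nontrivial. Your write-up merely makes explicit the exponent bookkeeping that the paper's proof leaves implicit when it asserts the existence of a nonzero polynomial $Q$ of degree less than $k$ in each variable.
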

\begin{proof}
Let $\lambda_1,\dots,\lambda_n\in\Lambda_k$ be distinct numbers. 
By \eqref{Lambda_k}, each 
$\lambda_i$ can be written as  
\begin{equation}\label{form}
\lambda_i=\prod_{j=1}^{k-1}\prod_{p\in \mathcal{P}_{i,j}\cup\{1\}}^{}p^{1+\frac{j}{k}},
\end{equation}
where $\mathcal{P}_{i,j}$ $(j=1,\dots,k-1)$ are finite subsets of 
prime numbers, not all empty, with $\mathcal{P}_{i,j_1}\cap\mathcal{P}_{i,j_2}=\varnothing$ $(j_1\neq j_2)$. 
Let $\cup_{i,j}\mathcal{P}_{i,j}=\{p_1,\dots,p_m\}$. 
Suppose to the contrary that the numbers $1,\lambda_1^{-1},\dots,\lambda_n^{-1}$ are linearly dependent over $\mathbb{Q}$. 
Then, by \eqref{form}, there exists a non-zero polynomial $Q(X_1,\dots,X_m)\in\mathbb{Z}[X_1,\dots,X_m]$, 
with the degree in each variable $X_j$ less than $k$, such that 
$Q(p_1^{1/k},\dots,p_m^{1/k})=0$. 
This contradicts \cite[Theorem~2]{Be39}, and thus, Lemma~\ref{linind} is proved.  
\end{proof}
\begin{lemma}\label{lem:1018}
Let  $n\ge1$ be an integer and $\lambda\in \Lambda_k$. 
Then, for each integer $j=1,2$, the following properties are equivalent:
\begin{enumerate}[{\rm (i)}]
\item\label{s1} 
$(n^k,(n+j)^k)\cap \mathcal{S}_{\{\lambda\}}\neq\varnothing$.

\item\label{s3} 
$\#\bigl((n^k,(n+j)^k)\cap \mathcal{S}_{\{\lambda\}}\bigr)=1$.

\item\label{s2}
$\left\{\frac{n}{\lambda}\right\}>1-\frac{j}{\lambda}$.
\end{enumerate}
\end{lemma}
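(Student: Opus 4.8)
The plan is to translate all three conditions into statements about integer points in a short real interval and then read off the equivalences directly. First I would describe the elements of $\mathcal{S}_{\{\lambda\}}$ lying in $(n^k,(n+j)^k)$. By definition $\mathcal{S}_{\{\lambda\}}=\{(a\lambda)^k\mid a\in\mathbb{Z}_{\ge1}\}$, and since all quantities involved are positive, taking $k$th roots shows that $(a\lambda)^k\in(n^k,(n+j)^k)$ if and only if $n<a\lambda<n+j$, that is,
\[
\frac{n}{\lambda}<a<\frac{n+j}{\lambda}.
\]
Thus counting the elements of $(n^k,(n+j)^k)\cap\mathcal{S}_{\{\lambda\}}$ is exactly the same as counting integers $a$ in the open interval $\left(\frac{n}{\lambda},\frac{n+j}{\lambda}\right)$.

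Next I would observe that this interval has length $\frac{j}{\lambda}$, and since $\lambda>2$ and $j\in\{1,2\}$ we have $\frac{j}{\lambda}<1$. Hence it contains at most one integer, which immediately gives the equivalence of \eqref{s1} and \eqref{s3}: the intersection is nonempty precisely when it is a singleton. To obtain \eqref{s2}, I would pin down the unique candidate integer. By Lemma~\ref{linind} (applied to the single element $\lambda$), the numbers $1$ and $\lambda^{-1}$ are linearly independent over $\mathbb{Q}$, so $\lambda^{-1}$, and therefore $\lambda$, is irrational; consequently $n/\lambda$ is irrational and in particular not an integer. The smallest integer strictly exceeding $n/\lambda$ is then $\lfloor n/\lambda\rfloor+1$, and this is also the only integer that can possibly lie in an interval of length $<1$ with left endpoint $n/\lambda$. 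This candidate belongs to $\left(\frac{n}{\lambda},\frac{n+j}{\lambda}\right)$ if and only if $\lfloor n/\lambda\rfloor+1<\frac{n}{\lambda}+\frac{j}{\lambda}$, which rearranges to $1-\{n/\lambda\}<\frac{j}{\lambda}$, i.e.\ $\left\{\frac{n}{\lambda}\right\}>1-\frac{j}{\lambda}$. This is exactly condition \eqref{s2}, closing the cycle of equivalences. Note also that $\lfloor n/\lambda\rfloor+1\ge1$, so the candidate is automatically a positive integer and no separate verification of $a\ge1$ is required.

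The argument is essentially elementary, and the only point requiring genuine care is the irrationality of $\lambda$. This rules out the boundary case in which $n/\lambda$ would be an integer coinciding with the left endpoint, a case that would otherwise force a delicate decision about whether that endpoint is attained (the interval being open). I expect this to be the one substantive step, and it is precisely where Lemma~\ref{linind} is indispensable; apart from that, the proof rests only on the length estimate $j/\lambda<1$.
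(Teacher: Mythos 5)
Your proof is correct and follows essentially the same route as the paper: both reduce membership of $a^k\lambda^k$ in $(n^k,(n+j)^k)$ to the condition $n/\lambda<a<(n+j)/\lambda$, use the length bound $j/\lambda<1$ (i.e.\ $\lambda>2\ge j$) to get uniqueness, and identify the only candidate as $\lfloor n/\lambda\rfloor+1$, whose membership is exactly $\{n/\lambda\}>1-j/\lambda$. The appeal to irrationality of $\lambda$ is harmless but not actually needed here, since if $n/\lambda$ were an integer the criterion $\lfloor n/\lambda\rfloor+1<n/\lambda+j/\lambda$ would correctly reduce to the false inequality $1<j/\lambda$, matching the empty intersection.
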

\begin{proof}
We first assume \eqref{s1}. 
Let $a_1^k\lambda^k,a_2^k\lambda^k\in (n^k,(n+j)^k)\cap\mathcal{S}_{\{\lambda\}}$. 
Then 
$n< a_1\lambda, a_2\lambda<n+j$ and so $\lambda|a_1-a_2|<j\le 2$. 
Since $\lambda>2$ and $a_1,a_2$ are integers, we obtain $a_1=a_2$, which shows \eqref{s3}.  
Next we prove \eqref{s3}$\Rightarrow$\eqref{s2}. 
If there exists an integer $a\ge1$ satisfying $n^k<a^k\lambda^k<(n+j)^k$, we have $a-1<n/\lambda<a<(n+j)/\lambda$ since $0<j/\lambda<1$ and $a$ is an integer, and so  
\[
\left\{\frac{n}{\lambda}\right\}=\frac{n}{\lambda}-(a-1)>\frac{n}{\lambda}+1-\frac{n+j}{\lambda}=1-\frac{j}{\lambda}.
\]
Finally, we assume \eqref{s2}. 
Then 
\[
\frac{n}{\lambda}-\left\lfloor \frac{n}{\lambda}\right\rfloor=\left\{\frac{n}{\lambda}\right\}>1-\frac{j}{\lambda},\]
where $\lfloor n/\lambda\rfloor$ denotes the integer part of $n/\lambda$, 
so that 
\[
\frac{n}{\lambda}<a:=1+\left\lfloor \frac{n}{\lambda}\right\rfloor <\frac{n+j}{\lambda}.
\]
Hence, we have $a^k\lambda^k\in(n^k,(n+j)^k)\cap \mathcal{S}_{\{\lambda\}}$ 
and property \eqref{s1} holds. Lemma~\ref{lem:1018} is proved. 
\end{proof}

\begin{lemma}\label{lem:1016}
Let $\mathcal{I}_1,\mathcal{I}_2,\mathcal{I}_3$ be finite subsets of $\Lambda_k$ with 
$\mathcal{I}_i\cap\mathcal{I}_j=\varnothing$ $(i\neq j)$.  
Then the set  
\begin{equation}\label{setA}
\mathcal{B}:=
\left\{
n\in\mathbb{Z}_{\ge1}\left| \substack{\text{$\#\big( (n^k,(n+1)^k)\cap \mathcal{S}_{\mathcal{I}_1}\bigr)=\# \mathcal{I}_1$}, \\[0.05cm]
\text{$\#\big( ((n+1)^k,(n+2)^k)\cap \mathcal{S}_{\mathcal{I}_2}\bigr)=\# \mathcal{I}_2$,}\\[0.05cm]
\text{$( n^k,(n+2)^k)\cap \mathcal{S}_{\mathcal{I}_3}=\varnothing$}}
\right\}\right.
\end{equation}
has positive asymptotic density 
\[
d(\mathcal{B})=\displaystyle\prod_{\lambda\in \mathcal{I}_1\cup \mathcal{I}_2}
\frac{1}\lambda{}\cdot \prod_{\lambda\in \mathcal{I}_3}\left(1-\frac{2}{\lambda}\right).
\]
\end{lemma}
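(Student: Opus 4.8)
The plan is to translate each of the three defining conditions of $\mathcal{B}$ into a constraint on the fractional parts $\{n/\lambda\}$ ($\lambda\in\mathcal{I}_1\cup\mathcal{I}_2\cup\mathcal{I}_3$), and then to obtain the density as the volume of the resulting box by appealing to the equidistribution supplied by Lemmas~\ref{lem1} and~\ref{linind}. First I would record that, because each $k$-full integer has the unique representation $a^k\lambda^k$, the set $\mathcal{S}_{\mathcal{I}}$ is the disjoint union of the $\mathcal{S}_{\{\lambda\}}$ over $\lambda\in\mathcal{I}$, and that by Lemma~\ref{lem:1018} any interval $(N^k,(N+1)^k)$ meets each $\mathcal{S}_{\{\lambda\}}$ in at most one point. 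Hence the first condition $\#\big((n^k,(n+1)^k)\cap\mathcal{S}_{\mathcal{I}_1}\big)=\#\mathcal{I}_1$ forces every $\lambda\in\mathcal{I}_1$ to contribute a point, which by Lemma~\ref{lem:1018} (with $j=1$) is exactly $\{n/\lambda\}>1-1/\lambda$ for all $\lambda\in\mathcal{I}_1$; similarly the third condition becomes $\{n/\lambda\}\le 1-2/\lambda$ for all $\lambda\in\mathcal{I}_3$ (Lemma~\ref{lem:1018} with $j=2$).

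The one step that needs care is the second condition, since its interval is $((n+1)^k,(n+2)^k)=(m^k,(m+1)^k)$ with $m=n+1$. Applying the previous reduction to $m$ shows it is equivalent to $\{(n+1)/\lambda\}>1-1/\lambda$ for all $\lambda\in\mathcal{I}_2$, and I would then rewrite this in terms of $\{n/\lambda\}$. Writing $(n+1)/\lambda=n/\lambda+1/\lambda$ with $0<1/\lambda<1/2$ (as $\lambda>2$) and splitting into the two cases according to whether $\{n/\lambda\}+1/\lambda$ exceeds $1$, the ``carry'' case turns out to be impossible and the condition collapses to the single interval $1-2/\lambda<\{n/\lambda\}<1-1/\lambda$. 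This bookkeeping with the shift by one is the part I expect to be the main obstacle; everything else is routine.

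Once all three conditions are expressed through $\{n/\lambda\}$, the set $\mathcal{B}$ is exactly the set of $n$ for which $\big(\{n/\lambda\}\big)_{\lambda\in\mathcal{I}_1\cup\mathcal{I}_2\cup\mathcal{I}_3}$ lies in the box with $\lambda$-side $(1-1/\lambda,1)$ for $\lambda\in\mathcal{I}_1$, $(1-2/\lambda,1-1/\lambda)$ for $\lambda\in\mathcal{I}_2$, and $[0,1-2/\lambda)$ for $\lambda\in\mathcal{I}_3$. Since $\mathcal{I}_1,\mathcal{I}_2,\mathcal{I}_3$ are pairwise disjoint the numbers $\lambda^{-1}$ are distinct, so Lemma~\ref{linind} makes $1$ together with these $\lambda^{-1}$ linearly independent over $\mathbb{Q}$, and Lemma~\ref{lem1} gives that $\big(\{n/\lambda\}\big)_\lambda$ is uniformly distributed modulo~$1$. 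Therefore $d(\mathcal{B})$ equals the volume of the box, i.e.\ the product of the side lengths $1/\lambda$ ($\lambda\in\mathcal{I}_1$), $1/\lambda$ ($\lambda\in\mathcal{I}_2$) and $1-2/\lambda$ ($\lambda\in\mathcal{I}_3$), which is the claimed $\prod_{\lambda\in\mathcal{I}_1\cup\mathcal{I}_2}\lambda^{-1}\cdot\prod_{\lambda\in\mathcal{I}_3}(1-2/\lambda)$; positivity is clear since $\lambda>2$ gives $1-2/\lambda>0$.

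Finally I would note that the strict versus non-strict inequalities coming from Lemma~\ref{lem:1018} are harmless: each $\lambda\in\Lambda_k$ is irrational (Lemma~\ref{linind} applied to a single element shows $\lambda^{-1}\notin\mathbb{Q}$), so $(n+j)/\lambda$ is never an integer and $\{n/\lambda\}$ never equals an endpoint $1-j/\lambda$. The box boundary is thus a null set never hit by the sequence, so the half-open intervals required by Lemma~\ref{lem1} yield the same volume, completing the computation.
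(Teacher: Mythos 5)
Your proposal is correct and follows essentially the same route as the paper: reduce each defining condition to a constraint on the fractional parts $\{n/\lambda\}$ via Lemma~\ref{lem:1018}, then invoke Lemmas~\ref{linind} and~\ref{lem1} to identify the density with the volume of the resulting box. The only (harmless) divergence is in the middle interval: the paper obtains $\{n/\lambda\}\in\bigl(1-\tfrac{2}{\lambda},\,1-\tfrac{1}{\lambda}\bigr)$ by subtracting the count over $(n^k,(n+1)^k)$ from that over $(n^k,(n+2)^k)$, whereas you apply Lemma~\ref{lem:1018} at $n+1$ and carry out the case analysis on $\{n/\lambda\}+1/\lambda$ (your observation that the carry case is impossible since $2-2/\lambda>1$ is correct); both yield the same box and the same product formula.
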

\begin{proof}
Let $\mathcal{I}\subseteq\Lambda_k$ be a finite set. 
Then we have $\mathcal{S}_{\mathcal{I}}=\sqcup_{\lambda\in\mathcal{I}} \mathcal{S}_{\{\lambda\}}$ (disjoint union) since every $k$-full integer is represented uniquely as \eqref{eq:0}. 
Hence, for any integer $n\ge1$ and each integer $j=1,2$, 
we obtain by Lemma~\ref{lem:1018}
\begin{equation}\label{d1}
\#\bigl(
(n^k,(n+j)^k)\cap \mathcal{S}_\mathcal{I}
\bigr)
=\sum_{\lambda\in \mathcal{I}}\#
\bigl(
(n^k,(n+j)^k)\cap \mathcal{S}_{\{\lambda\}}
\bigr)
=
\#\left\{
\lambda\in \mathcal{I}\,\left|\, \left\{\tfrac{n}{\lambda}\right\}\in\bigl(1-\tfrac{j}{\lambda},1\bigr)
\right\}\right.
\end{equation}
and 
\begin{align}
\#\bigl(
((n+1)^k,(n+2)^k)\cap \mathcal{S}_\mathcal{I}
\bigr)&=
\#\bigl(
(n^k,(n+2)^k)\cap \mathcal{S}_\mathcal{I}
\bigr)-
\#\bigl(
(n^k,(n+1)^k)\cap \mathcal{S}_\mathcal{I}
\bigr)\nonumber\\
&=
\#\left\{
\lambda\in \mathcal{I}\,\left|\, \left\{\tfrac{n}{\lambda}\right\}\in\bigl(1-\tfrac{2}{\lambda},1-\tfrac{1}{\lambda}\bigr)
\right\},\right.\label{d2}
\end{align}
where we note that $\{\tfrac{n}{\lambda}\}\neq 0,1-\tfrac{2}{\lambda},1-\tfrac{1}{\lambda}$ since $\lambda\in\mathcal{I}$ is irrational. 
Thus, by \eqref{d1} and \eqref{d2}, the set $\mathcal{B}$ in \eqref{setA} is given by  
\[
\mathcal{B}=
\left\{
n\in\mathbb{Z}_{\ge1}\,\left|\, \substack{\text{$\left\{\tfrac{n}{\lambda}\right\}\in\Bigl(1-\tfrac{1}{\lambda},1\Bigr)$ for any $\lambda\in\mathcal{I}_1$}, \\
\text{$\left\{\tfrac{n}{\lambda}\right\}\in\Bigl(1-\tfrac{2}{\lambda},1-\tfrac{1}{\lambda}\Bigr)$ for any $\lambda\in\mathcal{I}_2$,}\\
\text{$\left\{\tfrac{n}{\lambda}\right\}\in\Bigl(0,1-\tfrac{2}{\lambda}\Bigr)$ for any $\lambda\in\mathcal{I}_3$}}
\right\}.\right.
\]
Therefore, Lemma \ref{lem:1016} follows from Lemmas \ref{lem1} and \ref{linind}. 
\end{proof}

\section{Proof of Theorem \ref{thm2}}\label{sec3}

Let $\mathcal{I},\mathcal{J}\subseteq \Lambda_k$ be as in Theorem~\ref{thm2} 
and 
\begin{equation}\label{Lambdaset}
\Lambda_k=\{\lambda_j\mid 2<\lambda_1<\lambda_2<\cdots\}.
\end{equation}
Let $\varepsilon>0$ be an arbitrary constant. 
By \eqref{conv} there exists the least positive integer $N=N(\varepsilon)$ 
such that the set 
$\mathcal{L}:=\{\lambda_1,\lambda_2,\dots,\lambda_N\}\subseteq\Lambda_k$ satisfies the following properties: 
\begin{enumerate}[(i)]


\item\label{prop2}
$\displaystyle\sum_{\lambda\in\Lambda_k\setminus \mathcal{L}}^{}
{\lambda}^{-1}
<\varepsilon$.

\item\label{pro3}
$
0<d_\mathcal{I,J,L}^{(k)}-d_\mathcal{I,J}^{(k)}<
\varepsilon$,
where
\[
d_\mathcal{I,J}^{(k)}:=
\displaystyle\prod_{\lambda\in \mathcal{I}\cup \mathcal{J}}
\frac{1}\lambda{}\cdot \prod_{\lambda\in \Lambda_k\setminus(\mathcal{I}\cup\mathcal{J})}\left(1-\frac{2}{\lambda}\right),\qquad 
d_\mathcal{I,J,L}^{(k)}:=
\displaystyle\prod_{\lambda\in \mathcal{I}\cup \mathcal{J}}
\frac{1}\lambda{}\cdot \prod_{\lambda\in \mathcal{L}\setminus(\mathcal{I}\cup\mathcal{J})}\left(1-\frac{2}{\lambda}\right).
\]
\end{enumerate}
Let $\mathcal{B}_\mathcal{I,J}^{(k)}$ be as in \eqref{AIJ} and define 
\[
\mathcal{B}_\mathcal{I,J,L}^{(k)}:=
\left\{
n\in\mathbb{Z}_{\ge1}\left| \substack{\text{$\#\big( (n^k,(n+1)^k)\cap \mathcal{S}_{\mathcal{I}}\bigr)=\# \mathcal{I}$}, \\[0.05cm]
\text{$\#\big( ((n+1)^k,(n+2)^k)\cap \mathcal{S}_{\mathcal{J}}\bigr)=\# \mathcal{J}$,}\\[0.05cm]
\text{$( n^k,(n+2)^k)\cap \mathcal{S}_{\mathcal{L}\setminus (\mathcal{I}\cup\mathcal{J})}=\varnothing$}}
\right\}.\right.
\] 
Then, applying Lemma~\ref{lem:1016} with 
\[
\mathcal{B}:=\mathcal{B}_\mathcal{I,J,L}^{(k)},\quad 
\mathcal{I}_1:=\mathcal{I}, \quad 
\mathcal{I}_2:=\mathcal{J}, \quad 
\mathcal{I}_3:=\mathcal{L}\setminus(\mathcal{I}\cup\mathcal{J})
\]
and using property \eqref{pro3}, we obtain   
\begin{align}
\Bigg|
\frac{\#\mathcal{B}_\mathcal{I,J}^{(k)}(x)}{x}-d_\mathcal{I,J}^{(k)}
\Bigg|&\leq 
\Bigg|
\frac{\#\mathcal{B}_\mathcal{I,J}^{(k)}(x)}{x}-\frac{\#\mathcal{B}_\mathcal{I,J,L}^{(k)}(x)}{x}
\Bigg|+
\Bigg|
\frac{\#\mathcal{B}_\mathcal{I,J,L}^{(k)}(x)}{x}-d_\mathcal{I,J,L}^{(k)}
\Bigg|
+\big|d_\mathcal{I,J,L}^{(k)}-d_\mathcal{I,J}^{(k)}\big|,\nonumber\\
&<\Bigg|
\frac{\#\mathcal{B}_\mathcal{I,J}^{(k)}(x)-\#\mathcal{B}_\mathcal{I,J,L}^{(k)}(x)}{x}
\Bigg|+2\varepsilon\label{eq:10131}
\end{align}
for sufficiently large $x$. 
Define 
\begin{equation}\label{eq:1015}
\mathcal{C}_\mathcal{I,J,L}^{(k)}:=\mathcal{B}_\mathcal{I,J,L}^{(k)}\setminus \mathcal{B}_\mathcal{I,J}^{(k)}=\{n\in \mathcal{B}_\mathcal{I,J,L}^{(k)}\mid (n^k,(n+2)^k)\cap \mathcal{S}_{\Lambda_k\setminus \mathcal{L}}\neq \varnothing\}
\end{equation}
and 
\[
\mathcal{C}_\mathcal{I,J,L}^{(k,\text{odd})}:=\{n\in \mathcal{C}_\mathcal{I,J,L}^{(k)}\mid \text{$n$\,:\,odd}\}.
\]
Let $x>2$ be a real number. 
Then  
the map 
$\rho:\mathcal{C}_\mathcal{I,J,L}^{(k,\text{odd})}(x)\to \mathcal{S}_{\Lambda_k\setminus \mathcal{L}}((x+2)^k)$ defined by 
\[
\rho(n):=\min\{m\in\mathbb{Z}_{\ge1}\mid m\in (n^k,(n+2)^k)\cap \mathcal{S}_{\Lambda_k\setminus \mathcal{L}}\}
\]
is well-defined from \eqref{eq:1015}, and moreover, it is injective 
since 
\[
\rho(n_j)\in(n_j^k,(n_j+2)^k) \quad (j=1,2)\qquad\text{and}\qquad 
(n_1^k,(n_1+2)^k)\cap(n_2^k,(n_2+2)^k)=\varnothing
\] for any distinct odd integers $n_1,n_2\in\mathcal{C}_\mathcal{I,J,L}^{(k,\text{odd})}(x)$. 
Hence, by property \eqref{prop2}, we have 
\[
\#\mathcal{C}_\mathcal{I,J,L}^{(k,\text{odd})}(x)\leq \#\mathcal{S}_{\Lambda_k\setminus \mathcal{L}}((x+2)^k)\le \sum_{\substack{z\in\mathbb{Z}_{\ge1},\,\lambda\in\Lambda_k\setminus \mathcal{L},\\z^k\lambda^k<(x+2)^k}}1\le 
\sum_{\lambda\in \Lambda_k\setminus \mathcal{L}}^{}\frac{x+2}{\lambda}<\varepsilon(x+2)<2\varepsilon x.
\]
Similarly, we can obtain the same upper bound for $\mathcal{C}_\mathcal{I,J,L}^{(k,\text{even})}:=
\mathcal{C}_\mathcal{I,J,L}^{(k)}\setminus\mathcal{C}_\mathcal{I,J,L}^{(k,\text{odd})}$, and so by \eqref{eq:1015}
\begin{equation}\label{A3}
0\le \#\mathcal{B}_\mathcal{I,J,L}^{(k)}(x)-\#\mathcal{B}_\mathcal{I,J}^{(k)}(x)=\#\mathcal{C}_\mathcal{I,J,L}^{(k)}(x)=
\#\mathcal{C}_\mathcal{I,J,L}^{(k,\text{odd})}(x)+\#\mathcal{C}_\mathcal{I,J,L}^{(k,\text{even})}(x)<4\varepsilon x.
\end{equation}
Therefore, by \eqref{eq:10131} and \eqref{A3}, we obtain 
\[
\Bigg|
\frac{
\#\mathcal{B}_\mathcal{I,J}^{(k)}(x)
}{x}
-d_\mathcal{I,J}^{(k)}
\Bigg|<6\varepsilon
\]
for sufficiently large $x$, and the proof of Theorem~\ref{thm2} is completed.


\section{Proof of Theorem~\ref{thm1}}\label{sec4}

Let $\mathcal{A}_{\ell,m}^{(k)}$ and $\mathcal{B}_\mathcal{I,J}^{(k)}$ be 
defined as in \eqref{eq:1024} and \eqref{AIJ}, respectively.

\begin{lemma}\label{lem:1024}
For any integers $\ell,m\ge0$, we have 
\[
\mathcal{A}_{\ell,m}^{(k)}=
\bigsqcup_{\substack{\mathcal{I}, \mathcal{J}\subseteq \Lambda_k\\\#\mathcal{I}=
\ell,\,\#\mathcal{J}=m,\,\mathcal{I}\cap\mathcal{J}=\varnothing}}
\mathcal{B}_{\mathcal{I,J}}^{(k)}.
\]
\end{lemma}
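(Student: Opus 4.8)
The plan is to attach to each integer $n\ge1$ a combinatorial ``fingerprint'' that simultaneously encodes membership in every $\mathcal{B}_{\mathcal{I},\mathcal{J}}^{(k)}$, and then read off disjointness and exhaustiveness of the union directly from this fingerprint. Concretely, for each $n$ I would set
\[
\mathcal{I}_n:=\{\lambda\in\Lambda_k\mid (n^k,(n+1)^k)\cap\mathcal{S}_{\{\lambda\}}\neq\varnothing\},\qquad
\mathcal{J}_n:=\{\lambda\in\Lambda_k\mid ((n+1)^k,(n+2)^k)\cap\mathcal{S}_{\{\lambda\}}\neq\varnothing\},
\]
recording which $\lambda$ place a $k$-full integer in the first, respectively second, subinterval. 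Both are finite sets, since any bounded interval contains only finitely many $k$-full integers.

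The crucial input is the implication \eqref{s1}$\Rightarrow$\eqref{s3} of Lemma~\ref{lem:1018} in the case $j=2$: each $\lambda\in\Lambda_k$ contributes at most one element of $\mathcal{S}_{\{\lambda\}}$ to the whole interval $(n^k,(n+2)^k)$. In particular $\mathcal{I}_n\cap\mathcal{J}_n=\varnothing$, since a common $\lambda$ would force two distinct elements of $\mathcal{S}_{\{\lambda\}}$ in $(n^k,(n+2)^k)$ (the two open subintervals being disjoint). Moreover, using $\mathcal{S}_{\mathcal{I}}=\sqcup_{\lambda\in\mathcal{I}}\mathcal{S}_{\{\lambda\}}$ together with the $j=1$ case, the two counting functions in the definition of $\mathcal{B}_{\mathcal{I},\mathcal{J}}^{(k)}$ collapse to
\[
\#\bigl((n^k,(n+1)^k)\cap\mathcal{S}_{\mathcal{I}}\bigr)=\#(\mathcal{I}\cap\mathcal{I}_n),\qquad
\#\bigl(((n+1)^k,(n+2)^k)\cap\mathcal{S}_{\mathcal{J}}\bigr)=\#(\mathcal{J}\cap\mathcal{J}_n).
\]

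Next I would prove the equivalence: $n\in\mathcal{B}_{\mathcal{I},\mathcal{J}}^{(k)}$ if and only if $\mathcal{I}_n=\mathcal{I}$ and $\mathcal{J}_n=\mathcal{J}$. The first two defining conditions read $\#(\mathcal{I}\cap\mathcal{I}_n)=\#\mathcal{I}$ and $\#(\mathcal{J}\cap\mathcal{J}_n)=\#\mathcal{J}$, i.e.\ $\mathcal{I}\subseteq\mathcal{I}_n$ and $\mathcal{J}\subseteq\mathcal{J}_n$; the third condition $(n^k,(n+2)^k)\cap\mathcal{S}_{\Lambda_k\setminus(\mathcal{I}\cup\mathcal{J})}=\varnothing$ translates into $\mathcal{I}_n\cup\mathcal{J}_n\subseteq\mathcal{I}\cup\mathcal{J}$. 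Combining these three containments yields $\mathcal{I}\cup\mathcal{J}=\mathcal{I}_n\cup\mathcal{J}_n$, and then comparing the cardinalities of the two \emph{disjoint} decompositions (using $\mathcal{I}\cap\mathcal{J}=\varnothing$ and $\mathcal{I}_n\cap\mathcal{J}_n=\varnothing$) upgrades the inclusions $\mathcal{I}\subseteq\mathcal{I}_n$, $\mathcal{J}\subseteq\mathcal{J}_n$ to equalities. The converse is immediate from the boxed identities.

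With this equivalence the lemma is formal. Distinct admissible pairs $(\mathcal{I},\mathcal{J})$ produce distinct fingerprints, so the sets $\mathcal{B}_{\mathcal{I},\mathcal{J}}^{(k)}$ are pairwise disjoint; and since $n\in\mathcal{A}_{\ell,m}^{(k)}$ is equivalent to $\#\mathcal{I}_n=\ell$ and $\#\mathcal{J}_n=m$, every such $n$ lies in exactly the summand indexed by its own fingerprint $(\mathcal{I}_n,\mathcal{J}_n)$, which is itself an admissible pair. I expect the only genuinely delicate point to be the cardinality argument that promotes the three containments to the equalities $\mathcal{I}=\mathcal{I}_n$ and $\mathcal{J}=\mathcal{J}_n$; everything else is bookkeeping resting on the ``at most one element per $\lambda$'' property supplied by Lemma~\ref{lem:1018}.
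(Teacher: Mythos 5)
Your proposal is correct and follows essentially the same route as the paper: both arguments rest on the disjoint decomposition $\mathcal{S}_{\mathcal{I}}=\sqcup_{\lambda\in\mathcal{I}}\mathcal{S}_{\{\lambda\}}$ and the equivalence \eqref{s1}$\Leftrightarrow$\eqref{s3} of Lemma~\ref{lem:1018} to attach to each $n$ the unique pair of index sets recording which $\lambda$ contribute to each subinterval. Your ``fingerprint'' formulation merely makes explicit the uniqueness/disjointness step that the paper states more briefly, and the cardinality argument you flag as delicate is sound (it could even be shortened, since $\mathcal{I}_n\subseteq\mathcal{I}\cup\mathcal{J}$ and $\mathcal{I}_n\cap\mathcal{J}=\varnothing$ already force $\mathcal{I}_n\subseteq\mathcal{I}$).
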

\begin{proof}
Clearly $\mathcal{A}_{\ell,m}^{(k)}\supseteq \mathcal{B}_\mathcal{I,J}^{(k)}$ for any $\mathcal{I,J}\subseteq \Lambda_k$ with $\#\mathcal{I}=
\ell,\,\#\mathcal{J}=m,\,\mathcal{I}\cap\mathcal{J}=\varnothing$. 
Let $n\in\mathcal{A}_{\ell,m}^{(k)}$. 
Since $\mathcal{S}_{k}=\sqcup_{\lambda\in\Lambda_k}\mathcal{S}_{\{\lambda\}}$, we have
\[
\ell=\#\bigl(
(n^k,(n+1)^k)\cap \mathcal{S}_{k}
\bigr)
=\sum_{\lambda\in \Lambda_k}\#
\bigl(
(n^k,(n+1)^k)\cap \mathcal{S}_{\{\lambda\}}
\bigr),
\]
and hence, from the equivalence of \eqref{s1} and \eqref{s3} in Lemma~\ref{lem:1018}, there exists a unique subset
$\mathcal{I}\subseteq\Lambda_k$ with $\#\mathcal{I}=\ell$ such that 
\[
\left\{
\begin{array}{l}
\text{$\#
\bigl(
(n^k,(n+1)^k)\cap \mathcal{S}_{\{\lambda\}}
\bigr)=1$ for any $\lambda\in \mathcal{I}$},\\[0.05cm]
\text{$
(n^k,(n+1)^k)\cap \mathcal{S}_{\{\lambda\}}
=\varnothing$ for any $\lambda\in \Lambda_k\setminus\mathcal{I}$}.
\end{array}
\right.
\]
Thus, noting $\mathcal{S}_\mathcal{I}=\sqcup_{\lambda\in\mathcal{I}}
\mathcal{S}_{\{\lambda\}}$, we obtain 
\begin{align*}
\#
\bigl(
(n^k,(n+1)^k)\cap \mathcal{S}_{\mathcal{I}}
\bigr)&=\sum_{\lambda\in \mathcal{I}}\#
\bigl(
(n^k,(n+1)^k)\cap \mathcal{S}_{\{\lambda\}}
\bigr)=\#\mathcal{I},\\
(n^k,(n+1)^k)\cap \mathcal{S}_{\Lambda_k\setminus\mathcal{I}}
&=
\bigcup_{\lambda\in \Lambda_k\setminus\mathcal{I}}
\bigl((n^k,(n+1)^k)\cap \mathcal{S}_{\{\lambda\}}
\bigr)=\varnothing.
\end{align*}
Similarly, there exists a unique subset 
$\mathcal{J}\subseteq\Lambda_k$ with $\#\mathcal{J}=m$ such that 
\[
\#
\bigl(
((n+1)^k,(n+2)^k)\cap \mathcal{S}_{\mathcal{J}}
\bigr)=\#\mathcal{J}\qquad \text{and}\qquad 
((n+1)^k,(n+2)^k)\cap \mathcal{S}_{\Lambda_k\setminus\mathcal{J}}
=\varnothing.
\]
Therefore, there exists a unique pair of subsets $\mathcal{I,J}\subseteq\Lambda_k$ such that $n\in\mathcal{B}_\mathcal{I,J}^{(k)}$. 
This completes the proof of Lemma \ref{lem:1024}. 
\end{proof}

Theorem~\ref{thm1} follows from Theorem~\ref{thm2} and Lemma~\ref{lem:1024}. 

\begin{proof}[Proof of Theorem~\ref{thm1}]
The proof is similar to that of Theorem~\ref{thm2}. 
Let $\Lambda_k$ be as in \eqref{Lambdaset} and $\varepsilon>0$ be arbitrary. 
Then there exists the least positive integer $N=N(\varepsilon)(\ge \ell+m)$ such that the set 
$\mathcal{L}:=\{\lambda_1,\lambda_2,\dots,\lambda_N\}\subseteq\Lambda_k$ satisfies the following properties: 
\begin{enumerate}[(i)]

\item\label{pro22}
$\displaystyle\sum_{\lambda\in\Lambda_k\setminus \mathcal{L}}^{}\lambda^{-1}
<\varepsilon$.

\item\label{pro32}
$
0\le e_{\ell,m}^{(k)}-e_{\ell,m,\mathcal{L}}^{(k)}<
\varepsilon$,
where
\[
e_{\ell,m}^{(k)}:=\sum_{\substack{\mathcal{I}, \mathcal{J}\subseteq \Lambda_k\\\#\mathcal{I}=\ell,\,\#\mathcal{J}=m,\,\mathcal{I}\cap\mathcal{J}=\varnothing}}d(\mathcal{B}_{\mathcal{I,J}}^{(k)}),\qquad 
e_{\ell,m,\mathcal{L}}^{(k)}:=\sum_{\substack{\mathcal{I}, \mathcal{J}\subseteq \Lambda_k\\\#\mathcal{I}=\ell,\,\#\mathcal{J}=m,\,\mathcal{I}\cap\mathcal{J}=\varnothing\\
\mathcal{I}\cup\mathcal{J}\subseteq \mathcal{L}}}d(\mathcal{B}_{\mathcal{I,J}}^{(k)}).
\]
\end{enumerate}
Define 
\begin{equation}\label{finiteness}
\mathcal{A}_{\ell,m,\mathcal{L}}^{(k)}:=
\bigsqcup_{\substack{\mathcal{I}, \mathcal{J}\subseteq \Lambda_k\\\#\mathcal{I}=\ell,\,\#\mathcal{J}=m,\,\mathcal{I}\cap\mathcal{J}=\varnothing\\
\mathcal{I}\cup\mathcal{J}\subseteq \mathcal{L}}}\mathcal{B}_{\mathcal{I,J}}^{(k)}.
\end{equation}
Since $\mathcal{L}$ is finite, the right-hand side of \eqref{finiteness} is a finite union.  
Hence, by Theorem~\ref{thm2}, we have 
\begin{equation}\label{useth1}
\Bigg|
\frac{\#\mathcal{A}_{\ell,m,\mathcal{L}}^{(k)}(x)}{x}-e_{\ell,m,\mathcal{L}}^{(k)}
\Bigg|\leq 
\sum_{\substack{\mathcal{I}, \mathcal{J}\subseteq \Lambda_k\\\#\mathcal{I}=\ell,\,\#\mathcal{J}=m,\,\mathcal{I}\cap\mathcal{J}=\varnothing\\
\mathcal{I}\cup\mathcal{J}\subseteq \mathcal{L}}}
\Bigg|
\frac{\#\mathcal{B}_\mathcal{I,J}^{(k)}(x)}{x}-d(\mathcal{B}_\mathcal{I,J}^{(k)})
\Bigg|<\varepsilon
\end{equation}
for sufficiently large $x$. 
Thus, by \eqref{useth1} and property \eqref{pro32}, we obtain   
\begin{align}
\Bigg|
\frac{\#\mathcal{A}_{\ell,m}^{(k)}(x)}{x}-e_{\ell,m}^{(k)}
\Bigg|&\leq 
\Bigg|
\frac{\#\mathcal{A}_{\ell,m}^{(k)}(x)}{x}-\frac{\#\mathcal{A}_{\ell,m,\mathcal{L}}^{(k)}(x)}{x}
\Bigg|+
\Bigg|
\frac{\#\mathcal{A}_{\ell,m,\mathcal{L}}^{(k)}(x)}{x}-e_{\ell,m,\mathcal{L}}^{(k)}
\Bigg|
+\big|e_{\ell,m,\mathcal{L}}^{(k)}-e_{\ell,m}^{(k)}\big|\nonumber\\
&<\Bigg|
\frac{\#\mathcal{A}_{\ell,m}^{(k)}(x)-\#\mathcal{A}_{\ell,m,\mathcal{L}}^{(k)}(x)}{x}
\Bigg|
+2\varepsilon\label{eq:1013144}
\end{align}
for sufficiently large $x$. 

On the other hand, by Lemma~\ref{lem:1024}, we have 
\begin{equation}\label{eq:1015333}
\mathcal{C}_{\ell,m,\mathcal{L}}^{(k)}:=\mathcal{A}_{\ell,m}^{(k)}\setminus \mathcal{A}_{\ell,m,\mathcal{L}}^{(k)}=
\bigsqcup_{\substack{\mathcal{I}, \mathcal{J}\subseteq \Lambda_k\\\#\mathcal{I}=\ell,\,\#\mathcal{J}=m,\,\mathcal{I}\cap\mathcal{J}=\varnothing\\
\mathcal{I}\cup\mathcal{J}\nsubseteq \mathcal{L}}}\mathcal{B}_{\mathcal{I,J}}^{(k)}.
\end{equation}
Let $x>2$ be a real number and define the map 
\[
\psi:\mathcal{C}_{\ell,m,\mathcal{L}}^{(k,\text{\rm odd})}(x):=
\{n\in \mathcal{C}_{\ell,m,\mathcal{L}}^{(k)}\mid n:\text{\rm odd}, n\le x\}\to 
\mathcal{S}_{\Lambda_k\setminus \mathcal{L}}\big((x+2)^k\big)\] 
by 
\[
\psi(n):=\min\{m\in\mathbb{Z}_{\ge1}\mid m\in (n^k,(n+2)^k)\cap \mathcal{S}_{\Lambda_k\setminus \mathcal{L}}\}.
\]
Then $\psi$ is well-defined. 
Indeed, if $n\in\mathcal{C}_{\ell,m,\mathcal{L}}^{(k,\text{\rm odd})}(x)$, then, by \eqref{eq:1015333}, 
there exist subsets $\mathcal{I,J}\subseteq \Lambda_k$ such that 
$\mathcal{I}\cup \mathcal{J}\nsubseteq \mathcal{L}$ and  $n\in \mathcal{B}_\mathcal{I,J}^{(k)}$. 
Hence, there exists a $\lambda\in \mathcal{I}\cup\mathcal{J}$ with $\lambda\notin \mathcal{L}$ 
such that 
\[
\varnothing\neq (n^k,(n+2)^k)\cap\mathcal{S}_{\{\lambda\}}
\subseteq  (n^k,(n+2)^k)\cap\mathcal{S}_{\Lambda_k\setminus\mathcal{L}}
\subseteq \mathcal{S}_{\Lambda_k\setminus \mathcal{L}}((x+2)^k).
\]
Moreover, similarly to the proof of Theorem~\ref{thm2}, 
we find that $\psi$ is injective and   
\[
\#\mathcal{C}_{\ell,m,\mathcal{L}}^{(k,\text{\rm odd})}(x)\leq \#\mathcal{S}_{\Lambda_k\setminus \mathcal{L}}((x+2)^k)<2\varepsilon x
\]
as well as the same upper bound for 
$\mathcal{C}_{\ell,m,\mathcal{L}}^{(k,\text{\rm even})}$. 
Thus, by \eqref{eq:1015333}, we have  
\begin{equation}\label{A31}
0\le \#\mathcal{A}_{\ell,m}^{(k)}(x)-\#\mathcal{A}_{\ell,m,\mathcal{L}}^{(k)}(x)=\#\mathcal{C}_{\ell,m,\mathcal{L}}^{(k)}(x)=
\#\mathcal{C}_{\ell,m,\mathcal{L}}^{(k,\text{\rm odd})}(x)+\#\mathcal{C}_{\ell,m,\mathcal{L}}^{(k,\text{\rm even})}(x)
<4\varepsilon x.
\end{equation}

Therefore, by \eqref{eq:1013144} and \eqref{A31}, we obtain
\[
\Bigg|
\frac{\#\mathcal{A}_{\ell,m}^{(k)}(x)}{x}-e_{\ell,m}^{(k)}
\Bigg|<6\varepsilon
\]
for sufficiently large $x$ and the proof of Theorem~\ref{thm1} is completed.  
\end{proof}


\section{Proof of Corollary~\ref{cor:1025}}\label{sec5}

Let $\ell\geq0$ be an integer, and let $\Lambda_k$ be as in \eqref{Lambdaset}. 
Let $\varepsilon>0$ be arbitrary. 
Since $\sum_{m\ge0}^{}d(\mathcal{A}_{\ell,m}^{(k)})$ converges by 
\eqref{eq:02061}, there exists the least positive 
integer $N=N(\varepsilon)\geq1$ satisfying 
\begin{equation}\label{eq:kome}
\sum_{j> N}^{}{\lambda_j}^{-1}<\varepsilon\qquad\text{and}\qquad  
\sum_{m> N}^{}d(\mathcal{A}_{\ell,m}^{(k)})<\varepsilon.
\end{equation}
Define 
$\mathcal{L}:=\{\lambda_1,\dots,\lambda_N\}$, $\mathcal{A}_{\ell,\le N}^{(k)}:=\bigsqcup_{m=0}^{N}\mathcal{A}_{\ell,m}^{(k)}$, and 
\[
\mathcal{A}_{\ell,> N}^{(k)}:=\mathcal{A}_{\ell}^{(k)}\setminus \mathcal{A}_{\ell,\le N}^{(k)}=\bigsqcup_{m\ge N+1}^{}\mathcal{A}_{\ell,m}^{(k)}.
\]
By Theorem~\ref{thm1}, we have 
\begin{equation}\label{Bse}
\Bigg|
\frac{\#\mathcal{A}_{\ell,\le N}^{(k)}(x)}{x}-\sum_{m=0}^{N}d(\mathcal{A}_{\ell,m}^{(k)})
\Bigg|\leq 
\sum_{m=0}^{N}
\Bigg|
\frac{\#\mathcal{A}_{\ell,m}^{(k)}(x)}{x}-d(\mathcal{A}_{\ell,m}^{(k)})
\Bigg|<\varepsilon
\end{equation}
for sufficiently large $x$. 

On the other hand, for each integer $n\in\mathcal{A}_{\ell,> N}^{(k)}$, there exists an integer $m_0\ge N+1$ with 
$n\in\mathcal{A}_{\ell,m_0}^{(k)}$, and so 
\begin{equation}\label{eq:1122}
N+1\le m_0=\#(I_n\cap \mathcal{S}_{k})=\sum_{\lambda\in\Lambda_k}\#(I_n\cap \mathcal{S}_{\{\lambda\}})
\end{equation}
with $I_n:=\bigl(
(n+1)^k,(n+2)^k
\bigr)$. 
By Lemma~\ref{lem:1018}, we have $\#(I_n\cap \mathcal{S}_{\{\lambda\}})\le1$ for every $\lambda\in\Lambda_k$, and hence, it follows from \eqref{eq:1122} and $\#\mathcal{L}=N$  that there exists at least one $\lambda=\lambda(n)\in\Lambda_k\setminus\mathcal{L}$ satisfying  
$I_n\cap \mathcal{S}_{\{\lambda\}}\neq \varnothing$. 
Thus, considering the injective map $\sigma:\mathcal{A}_{\ell,> N}^{(k)}(x)\to \mathcal{S}_{\Lambda_k\setminus\mathcal{L}}((x+2)^k)$ defined by 
$\sigma(n):=\min\{m\in\mathbb{Z}_{\ge1}\mid m\in I_n\cap \mathcal{S}_{\Lambda_k\setminus\mathcal{L}}\}
$, 
we can obtain 
\begin{equation}\label{eq:est}
0\le \#\mathcal{A}_{\ell}^{(k)}(x)-\#\mathcal{A}_{\ell,\le N}^{(k)}(x)\le \#\mathcal{A}_{\ell,> N}^{(k)}(x)\le 2\varepsilon x
\end{equation}
similarly to the proof of Theorem~\ref{thm2}. 
Therefore, by \eqref{eq:kome}, \eqref{Bse} and \eqref{eq:est}, we have 
\[
\Bigg|
\frac{\#\mathcal{A}_{\ell}^{(k)}(x)}{x}-\sum_{m\ge0}^{}d(\mathcal{A}_{\ell,m}^{(k)})
\Bigg|\leq 
\Bigg|
\frac{\#\mathcal{A}_{\ell}^{(k)}(x)-\#\mathcal{A}_{\ell,\le N}^{(k)}(x)}{x}
\Bigg|+2\varepsilon
<4\varepsilon
\]
for sufficiently large $x$, and the proof is completed.
\section{Explicit formulas and numerical values for $d(\mathcal{A}_{\ell,m}^{(k)})$}\label{sec6}

The infinite product 
\[
F_k(z):=\prod_{\lambda\in \Lambda_k}^{}\left(
1+\frac{z-2}{\lambda}
\right)
\]
is entire by \eqref{conv} and has a power series expansion $F_k(z)=\sum_{n=0}^{\infty}a_{k,n}z^n$ $(z\in\mathbb{C})$ with 
\begin{equation}\label{Tylor123}
a_{k,n}:=\sum_{\substack{\mathcal{L}\subseteq \Lambda_k\\\#\mathcal{L}=n}}^{}\prod_{\lambda\in\mathcal{L}}\frac{1}{\lambda}\cdot \prod_{\lambda\in\Lambda_k\setminus \mathcal{L}}^{}
\left(
1-\frac{2}{\lambda}
\right),\quad n\ge0.
\end{equation}
Hence, for any integers $\ell,m\ge0$, we have by \eqref{eq:1205} and \eqref{Tylor123}
\begin{equation}\label{eq:120501}
d(\mathcal{A}_{\ell,m}^{(k)})
=\binom{\ell+m}{\ell}a_{k,\ell+m},
\end{equation}
and so, by \eqref{eq:1025444} 
and \eqref{eq:120501},  
\begin{align}
\sum_{\ell\ge 0}^{}d(\mathcal{A}_{\ell}^{(k)})z^\ell&=\sum_{\ell,m\ge0}^{}d(\mathcal{A}_{\ell,m}^{(k)})z^\ell=
\sum_{n\ge0}^{}\sum_{\ell=0}^{n}\binom{n}{\ell}a_{k,n}z^\ell=\sum_{n\ge 0}^{}a_{k,n}(z+1)^n\label{eq:12055}\\
&=F_k(z+1)=\prod_{\lambda\in\Lambda_k}^{}
\left(1+\frac{z-1}{\lambda}\right),\nonumber
\end{align}
which is the formula~\eqref{gene} of Xiong and Zaharescu. 
Moreover, substituting $z=w-1$ into \eqref{eq:12055} and comparing the coefficients on 
both sides using \eqref{eq:120501}, we obtain the inverse formula for \eqref{eq:1025444}:
\begin{equation}\label{eq:1206}
d(\mathcal{A}_{\ell,m}^{(k)})=\sum_{n\ge 0}(-1)^{n}\binom{\ell+m+n}{\ell,m,n}
d(\mathcal{A}_{\ell+m+n}^{(k)}),
\end{equation}
where $\binom{\ell+m+n}{\ell,m,n}$ is a trinomial coefficient. 
Similarly, substituting $z=w-2$ into  
\[
F_k(z+2)=\prod_{\lambda\in \Lambda_k}^{}
\bigg(
1+\frac{z}{\lambda}
\bigg)=
\sum_{n\ge0}^{}\xi_{n}^{(k)}z^n,
\]
where $(\xi_{n}^{(k)})_{n\ge0}$ is a sequence defined in~\eqref{eq:112514}, 
we obtain  
\begin{equation}\label{eq:20251125}
d(\mathcal{A}_{\ell,m}^{(k)})=
\sum_{n\ge 0}(-2)^n\binom{\ell+m+n}{\ell,m,n}\xi_{\ell+m+n}^{(k)}.
\end{equation}
The first few numerical values of $d(\mathcal{A}_{\ell}^{(k)})$ and $d(\mathcal{A}_{\ell,m}^{(k)})$ for $k=2,3$ 
are presented in Tables~\ref{table0}-\ref{table2} below. 
These values were 
computed in Python using the explicit formulas~\eqref{eq:1124} and \eqref{eq:20251125}. 
On the other hand, substituting $z=1$ into \eqref{eq:12055} yields   
\begin{equation}\label{eq:02061}
\sum_{\ell\ge0}d(\mathcal{A}_{\ell}^{(k)})=\sum_{\ell,m\ge0}d(\mathcal{A}_{\ell,m}^{(k)})=1,
\end{equation}
which shows the countable additivity of the asymptotic density 
over the disjoint unions 
$
\mathbb{N}=\sqcup_{\ell\ge0}\mathcal{A}_{\ell}^{(k)}$ and 
$\mathbb{N}=\sqcup_{\ell,m\ge0}\mathcal{A}_{\ell,m}^{(k)}
$, respectively. 

The following theorem determines the maximum values of $d(\mathcal{A}_{\ell}^{(k)})$ and $d(\mathcal{A}_{\ell,m}^{(k)})$ for $k=2,3$. 
\begin{theorem}\label{max3}
The maximum values of $d(\mathcal{A}_{\ell}^{(k)})$ and $d(\mathcal{A}_{\ell,m}^{(k)})$ for $k=2,3$ are given by 
\begin{align*}
\max_{\ell\ge0}d(\mathcal{A}_{\ell}^{(2)})&=d(\mathcal{A}_{1}^{(2)})=0.395565\dots,
&\max_{\ell,m\ge0}d(\mathcal{A}_{\ell,m}^{(2)})=d(\mathcal{A}_{1,1}^{(2)})=0.158761\dots,\nonumber\\
\max_{\ell\ge0}d(\mathcal{A}_{\ell}^{(3)})&=d(\mathcal{A}_{3}^{(3)})=
0.220239\dots,
&\max_{\ell,m\ge0}d(\mathcal{A}_{\ell,m}^{(3)})=d(\mathcal{A}_{3,3}^{(3)})=0.048348\dots.
\end{align*}
\end{theorem}
\begin{proof}
Combining \eqref{eq:02061} with Shiu's estimates 
$d(\mathcal{A}_{0}^{(2)})=0.275\dots$ and 
$d(\mathcal{A}_{1}^{(2)})=0.395\dots$ 
(cf. \cite[p.~176]{S80}; see also Table~\ref{table0} below), 
we obtain  
\[
\max_{\ell\ge 2}d(\mathcal{A}_{\ell}^{(2)})<
\sum_{\ell\ge2}d(\mathcal{A}_{\ell}^{(2)})=
1-d(\mathcal{A}_{0}^{(2)})-d(\mathcal{A}_{1}^{(2)})<0.33,
\]
which shows $\max_{\ell\ge0}d(\mathcal{A}_{\ell}^{(2)})=d(\mathcal{A}_{1}^{(2)})$. 
Similarly, it follows from Table~\ref{table1} and \eqref{eq:02061} that 
\[
\max_{0\le \ell,m\le 3}d(\mathcal{A}_{\ell,m}^{(2)})=
d(\mathcal{A}_{1,1}^{(2)})=0.158\dots
\]
and
\[
\max_{\max(\ell,m)\ge4}
d(\mathcal{A}_{\ell,m}^{(2)})<
\sum_{\max(\ell,m)\ge4}d(\mathcal{A}_{\ell,m}^{(2)})=
1-
\sum_{0\le \ell,m\le 3}
d(\mathcal{A}_{\ell,m}^{(2)})< 0.040074,
\]
and hence, we have $\max_{\ell,m\ge0}d(\mathcal{A}_{\ell,m}^{(2)})=d(\mathcal{A}_{1,1}^{(2)})$. 
Moreover, combining Tables~\ref{table0} and \ref{table2} with \eqref{eq:1025444} and \eqref{eq:02061}, 
we obtain the latter assertions since 
\begin{align*}
\max_{\ell \ge 6} d(\mathcal{A}_{\ell}^{(3)}) &< \sum_{\ell \ge 6} d(\mathcal{A}_{\ell}^{(3)}) = 1 - \sum_{0 \le \ell \le 5} d(\mathcal{A}_{\ell}^{(3)}) < 0.16 \\
&< 0.220\ldots = d(\mathcal{A}_{3}^{(3)}) = \max_{0 \le \ell \le 5} d(\mathcal{A}_{\ell}^{(3)})
\end{align*}
and
\begin{align*}
\max_{\max(\ell,m) \ge 7} d(\mathcal{A}_{\ell,m}^{(3)}) &< \max_{\ell \ge 7} d(\mathcal{A}_{\ell}^{(3)}) \le 
\max \biggl\{ d(\mathcal{A}_{7}^{(3)}), \,\sum_{\ell \ge 8} d(\mathcal{A}_{\ell}^{(3)}) \biggr\} \\
&= \max \biggl\{ d(\mathcal{A}_{7}^{(3)}), \,1 - \sum_{0 \le \ell \le 7} d(\mathcal{A}_{\ell}^{(3)}) \biggr\} 
\le \max \{ 0.042, 0.026 \} \\
&< 0.048\ldots = d(\mathcal{A}_{3,3}^{(3)}) = \max_{0 \le \ell,m \le 6} d(\mathcal{A}_{\ell,m}^{(3)}).
\end{align*}
The proof of Theorem~\ref{max3} is completed.
\end{proof}
Extending the result of Theorem~\ref{max3} to the case $k \ge 4$ is not straightforward, 
as the current proof relies heavily on numerical computations.  
It is conjectured that the indices $\ell$ and $m$ maximizing 
the 
densities $d(\mathcal{A}_{\ell}^{(k)})$ and $d(\mathcal{A}_{\ell,m}^{(k)})$ 
increase as $k$ grows; this dependency significantly complicates the general case. 
The problem of determining these maximum densities for $k \ge 4$ remains to be explored.

\newpage




{\centering Tables. The first few numerical values of $d(\mathcal{A}_{\ell}^{(k)})$ and 
$d(\mathcal{A}_{\ell, m}^{(k)})$ for $k=2,3$,\\ 
truncated to six decimal places \par} %
\begin{table}[H]
    \centering
    \caption{$d(\mathcal{A}_{\ell}^{(2)})$ and $d(\mathcal{A}_{\ell}^{(3)})$}
    \vspace{2mm}
    \label{table0}    
    \begin{tabular}{r|rr}
        $\ell$ & $d(\mathcal{A}_{\ell}^{(2)})$ & $d(\mathcal{A}_{\ell}^{(3)})$ \\[0.5ex] \hline 
        \rule{0pt}{2.5ex} 0 & 0.275965 & 0.020037 \\
        1 & 0.395565 & 0.084806 \\
        2 & 0.231299 & 0.171014 \\
        3 & 0.077074 & 0.220239 \\
        4 & 0.017015 & 0.204704 \\
        5 & 0.002714 & 0.147035 \\
        6 & 0.000331 & 0.085293 \\
        7 & 0.000032 & 0.041214 \\
    \end{tabular}
\end{table}
\begin{table}[H]
    \centering
    \caption{$d(\mathcal{A}_{\ell,m}^{(2)})$}
    \vspace{2mm}
    \label{table1}
    \begin{tabular}{r|rrrrrr}
\scriptsize{\diagbox{$\ell$}{$m$}}& $0$ & $1$ & $2$ & $3$ & $4$ & $5$ \\ \hline \rule{0pt}{2.5ex} 
        $0$ & 0.049227 & 0.107920 & 0.079380& 0.030530& 0.007444& 0.001278\\
        $1$ &  & 0.158761 & 0.091591& 0.029777& 0.006393& 0.000991\\
        $2$ &  & & 0.044666& 0.012786& 0.002478& 0.000352\\
        $3$ &  &  & & 0.003304& 0.000588& 0.000077\\
        $4$ &  &  & & & 0.000097& 0.000012\\
        $5$ &  &  & & & & 0.000001\\
\end{tabular}
\end{table}
\begin{table}[H]
    \centering
    \caption{$d(\mathcal{A}_{\ell,m}^{(3)})$}
      \vspace{2mm}
    \label{table2}
    \begin{tabular}{r|rrrrrrr}
\scriptsize{\diagbox{$\ell$}{$m$}}& $0$ & $1$ & $2$ & $3$ & $4$ & $5$ & $6$\\\hline \rule{0pt}{2.5ex} 
        $0$ & 0.000146 & 0.000898 & 0.002413& 0.003899& 0.004360& 0.003654&0.002417\\
        $1$ &  & 0.004826 & 0.011698& 0.017443& 0.018274& 0.014504&0.009157\\
        $2$ &  & & 0.026165& 0.036549& 0.036261& 0.027472&0.016659\\
        $3$ &  &  & & 0.048348& 0.045787& 0.033318&0.019498\\
        $4$ &  &  & & & 0.041647& 0.029247&0.016580\\
        $5$ &  &  & & & & 0.019896&0.010961\\
        $6$ &  &  & & & & &0.005883\\      
        
    \end{tabular}
\end{table}

\section*{Acknowledgments}
This work was supported by JSPS KAKENHI Grant Numbers JP25K06911.

\end{document}